\newtheorem{theorem}{Theorem}[section]
\newtheorem*{theorem*}{theorem}
\newtheorem{proposition}[theorem]{Proposition}
\newtheorem{lemma}[theorem]{Lemma}
\newtheorem{Rem}[theorem]{Remark}
\newtheorem{definition}[theorem]{Definition}
\def\R{{\mathbb R}}
\def\Z{{\mathbb Z}}
\def\N{{\mathbb N}}
\def\QQ{{\mathbb Q}}
\def\cD{{\mathcal D}}
\def\cB{{\mathcal B}}
\def\cC{{\mathcal C}}
\def\cA{{\mathcal A}}
\def\cL{{\mathcal L}}
\def\cM{{\mathcal M}}
\def\cS{{\mathcal S}}
\newcommand{\bt}{\boldsymbol\tau}
\newcommand{\bs}{\boldsymbol\tau}
\DeclareMathOperator{\Id}{Id}
\DeclareMathOperator{\diag}{diag}
\DeclareMathOperator{\Aff}{Aff}
\DeclareMathOperator{\Inf}{Inf}
\author{Paulina Cecchi Bernales}%
\address{Centro de Modelamiento Matem\'atico, Universidad de Chile \&  UMI-CNRS 2807,  Beauchef 851, Santiago, Chile.}
\email{pcecchi@dim.uchile.cl}
\author{Sebasti\'an Donoso}
\address{Departamento de Ingenier\'{\i}a Matem\'atica and Centro de Modelamiento Matem\'atico, Universidad de Chile \& UMI-CNRS 2807, Beauchef 851, Santiago, Chile.}
\email{sdonoso@dim.uchile.cl}
\thanks{ This work was supported by Centro de Modelamiento Matem\'atico (CMM), ACE210010 and FB210005, BASAL funds for centers of excellence from ANID-Chile. The first author was supported by ANID/Fondecyt/3210746 and the second author by ANID/Fondecyt/1200897.}% and Grant Basal-ANID AFB170001.}
\begin{document}

\title{Strong orbit equivalence and superlinear complexity}

\subjclass[2010]{Primary: 37B10; Secondary: 54H20} 

\keywords{strong orbit equivalence, Cantor minimal systems, subshifts, complexity}

\date{\today}

\begin{abstract}
We show that within any strong orbit equivalent class, there exist minimal subshifts with arbitrarily low superlinear complexity. This is done by proving that for any simple dimension group with unit $(G,G^+,u)$ and any sequence of positive numbers $(p_n)_{n\in \N}$ such that $\lim n/p_n=0$, there exist a minimal subshift whose dimension group is order isomorphic to $(G,G^+,u)$ and whose complexity function grows slower than $p_n$. As a consequence, we get that any Choquet simplex can be realized as the set of invariant measures of a minimal Toeplitz subshift whose complexity grows slower than $p_n$.
\end{abstract}

\maketitle

\section{Introduction}\label{sec:intro}

Two minimal topological dynamical systems are orbit equivalent if there is a homeomorphism between the phase spaces sending orbits to orbits. They are strongly orbit equivalent if they are orbit equivalent and the {\em cocycle} function is discontinuous at most in one point (see \cref{subsec:oe} for precise definitions). When the strong orbit equivalence property was introduced in topological dynamics, it was not clear its relationship with entropy, in particular, it was asked if entropy was preserved under strong orbit equivalence. This was dramatically disproved by Boyle and Handelman \cite{boylehandelman}, who showed that any Cantor minimal system (a system where the phase space is a Cantor space) is strongly orbit equivalent to a zero entropy Cantor system. Later, in a series of papers, Sugisaki \cite{sugisaki:1998,sugisaki:2003,sugisaki:2007} generalized this result, showing among other things that for any $c\in [0,+\infty]$ any Cantor minimal system is strongly orbit equivalent to a system whose topological entropy equals $c$. All these results make clear that entropy poses no restriction whatsoever to being strongly orbit equivalent. So one could ask, what are the natural restrictions that arise from being strongly orbit equivalent. A fundamental object that appears in this context is the dimension group of a dynamical system, an algebraic concept imported from the study of $C^{\ast}$-algebras \cite{vershik}, \cite{HPS}. In \cite{GPS} Giordano, Putnam and Skau  showed that two Cantor minimal systems are strongly orbit equivalent if and only if they have homeomorphic dimension groups with unit (we refer to \cref{sec:basicdef} for definitions). In particular, entropy does not impose any restriction on the dimension group of a Cantor minimal system.

One could consider a finer notion than entropy and study how it restricts the dimension group of a Cantor minimal system. In the case of a subshift, we focus our attention on the {\em complexity function}, defined as the function $p_X\colon \N\to \N$ that counts  the  number  of  non-empty  cylinders  of  length $n$. The topological entropy of the subshift is nothing but the exponential growth rate of $p_X$.
The complexity function has been widely studied in the past decades, a survey is provided in \cite{fe99}. Note that among zero entropy systems, one can still observe a wide variety of behaviors of $p_X$ for different subshifts, ranging from bounded (when $X$ is a finite system), sublinear (e.g. primitive substitutions, see for example \cite{Que}) till superpolynomial subexponential functions. First, notice that the complexity function does impose some restrictions on the dimension group of a minimal subshift. A trivial example is when $p_X$ is bounded, since that implies that $X$ is finite \cite{HM}, \cite{HM2}, and its dimension group is isomorphic to $(\Z,\Z^+,1)$\footnote{Here, the minimality assumption implies that the system is a periodic orbit}. A  non-trivial restriction occurs when the minimal subshift has non-superlinear complexity,{\it i.e.}, $\liminf p_X(n)/n<+\infty$. In \cite{DDMP20}, such a system was proved to be of {\em finite rank} (they admit a Bratteli-Vershik representation with a bounded number of vertices at each level) and therefore its dimension group is severely constrained: it is an abelian group whose rational rank is finite.

Since having non-superlinear complexity constrains the dimension group of a minimal subshift we can ask: do other growths of the  complexity function constrain the dimension group of a minimal subshift? For instance, is there anything particular about the dimension group of a minimal subshift with subquadratic complexity? Our main result is that except for non-superlinearity, there is no restriction on the complexity of a minimal subshift with a given dimension group. 

\begin{theorem}\label{theo:main1}
Let $(G,G^+,u)$ be a simple dimension group with unit, and let $p_n$ be a sequence of positive real numbers such that $ \lim n/p_n=0$. Then there exists a minimal subshift $(Y,S)$ such that 
\begin{enumerate}
    \item $K^0(Y,S)$ and $(G,G^+,u)$ are isomorphic as ordered groups with unit.
    \item The complexity of $(Y,S)$ satisfies $\lim p_{Y}(n)/p_n=0$. 
\end{enumerate} 
%Moreover if $G$ is divisible, $(Y,S)$ can be chosen to be Toeplitz.
\end{theorem}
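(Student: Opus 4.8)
The plan is to realize $(G,G^+,u)$ as the dimension group of a carefully built Bratteli--Vershik system and then pass to a subshift by a coding that keeps the complexity under control. First I would invoke the Effros--Handelman--Shen type theorem: every simple dimension group with unit $(G,G^+,u)$ is the inductive limit of a sequence $\Z^{t_1}\xrightarrow{M_1}\Z^{t_2}\xrightarrow{M_2}\cdots$ with nonnegative integer matrices $M_n$, and (by telescoping and standard manipulations) we may assume each $M_n$ has strictly positive entries and that the unit $u$ is carried by a distinguished coordinate. This data is exactly a properly ordered Bratteli diagram $B$; by the Herman--Putnam--Skau theorem the associated Bratteli--Vershik system $(X_B,T_B)$ is a Cantor minimal system with $K^0(X_B,T_B)\cong(G,G^+,u)$ as ordered groups with unit. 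So condition (1) is arranged as soon as we make the telescoping compatible with the growth budget in (2).

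The heart of the argument is turning this into a \emph{subshift} of prescribed slow complexity. The standard device is to choose, at each level $n$, a Kakutani--Rokhlin tower decomposition of $X_B$ and to use the level-$n$ towers to define a coding. Concretely, after telescoping to a subsequence of levels, the tower names at level $n$ have heights $h_n\to\infty$ and there are at most $t_n$ of them; reading the $T_B$-orbit of a point through these names yields a subshift factor $(Y,S)$, and when the diagram is chosen so that distinct level-$n$ names are recognizable one gets a genuine conjugacy $X_B\cong Y$ (so $K^0$ is preserved and (1) holds). The number of words of length $\ell$ appearing in $Y$ with $h_n\le \ell< h_{n+1}$ is then bounded, via a return-word / concatenation count, by something like $C\, t_{n+1}\, h_{n+1}$ (each such word is essentially determined by a level-$(n{+}1)$ name together with a position, up to bounded ambiguity). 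Thus the complexity is piecewise controlled: $p_Y(\ell)\le C\,t_{n+1}h_{n+1}$ for all $\ell<h_{n+1}$.

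Now comes the budgeting, which is where the hypothesis $\lim n/p_n=0$ — equivalently $p_n/n\to\infty$ — is used. Given the sequence $(p_n)$, I would build the telescoping of the original Bratteli diagram recursively so as to choose the heights $h_n$ growing fast enough relative to the previously fixed combinatorial data: having fixed levels up to $n$, choose the next telescoping level so that the new height $h_{n+1}$ is enormous compared to $h_n, t_{n+1}$, yet on the range $h_n\le \ell<h_{n+1}$ we still have $C\, t_{n+1} h_{n+1}\le \varepsilon_n\, p_\ell$ — this is possible because $p_\ell/\ell\to\infty$ gives us, for $\ell$ in a long enough initial stretch of $[h_n,h_{n+1})$, as much room as we like, and the only constraint at the top of the block is to pick $h_{n+1}$ before we are forced to; a diagonal choice of $\varepsilon_n\to 0$ then yields $p_Y(\ell)/p_\ell\to 0$. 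Since the telescoping is a cofinal operation it does not change the inductive limit, so (1) is untouched.

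The main obstacle I anticipate is the simultaneous bookkeeping: one must interleave (a) the telescoping that produces a positive diagram with the right limit and unit, (b) the choice of heights large enough to beat $p_n$ on each dyadic-type block $[h_n,h_{n+1})$, and (c) the recognizability needed so that the coding is a conjugacy rather than a mere factor — and recognizability typically forces a small amount of ``padding'' or an extra telescoping step that must itself be absorbed into the complexity budget. Getting a clean uniform bound $p_Y(\ell)\le C\,t_{n+1}h_{n+1}$ on all of $[h_n,h_{n+1})$, rather than a bound that degrades near $h_{n+1}$, is the delicate point; I expect one handles it by the standard trick of inserting a spacer symbol or by working with the subshift generated by the level-$n$ \emph{return words}, for which the concatenation structure makes the count transparent, at the cost of a routine verification that this modification still has dimension group $(G,G^+,u)$.
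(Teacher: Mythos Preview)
Your overall plan---realize $(G,G^+,u)$ by a simple Bratteli diagram, telescope, read off an $\mathcal{S}$-adic subshift conjugate to the Bratteli--Vershik system, and budget the complexity against $(p_n)$---is exactly the paper's. The gap is the complexity estimate. Your bound $p_Y(\ell)\le C\,t_{n+1}h_{n+1}$ on $[h_n,h_{n+1})$ is \emph{constant} on the block, and the obstruction lies at the \emph{bottom} of the block, not the top as you suggest: at $\ell=h_n$ the ratio is dominated by $C\,t_{n+1}h_{n+1}/p_{h_n}$, and since $h_n$ (hence $p_{h_n}$) is already fixed when you choose $h_{n+1}$, you would need $h_{n+1}\le\varepsilon_n\, p_{h_n}/(Ct_{n+1})$. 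But telescoping can only \emph{increase} $h_{n+1}$, and you have no control over how large the jump $h_{n+1}/h_n$ is forced to be by the incidence matrices of the given diagram, nor over the size of $t_{n+1}$; for instance if $p_n=n\log n$ and the entries of the $A_i$ grow faster than logarithmically in the heights, a single step already violates the inequality. The hypothesis $n/p_n\to 0$ tells you nothing about $h_{n+1}/p_{h_n}$, so the diagonal choice of $\varepsilon_n$ cannot be carried out.

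What the paper does instead is secure a bound that is \emph{linear in $\ell$}: $p_Y(\ell)\le C\,|V''_{i+2}|^3\,\ell$ for $\ell$ in the $i$th block, so that $p_Y(\ell)/p_\ell$ is controlled by $|V''_{i+2}|^3\cdot \ell/p_\ell$ and never sees $h_{i+1}$ at all. This comes from the repetition-complexity estimate of \cite{DDMP20} (Proposition~\ref{prop:Complexitybound} here), but that estimate only yields the right order when the morphisms $\tau_i$ have repetition complexity $O(|V''_{i+1}|\cdot|V''_i|)$, i.e.\ each target letter occurs in essentially one run inside each $\tau_i(v)$. Arranging this is the real work and is not a generic telescoping: the paper first ``splits'' every level via the factorization $A_i=B(A_i,h_i)\,C(A_i,h_i)$, then conjugates by an adapted sequence $(J_i)$ (Lemma~\ref{lema:AdaptedMatrices}) so that every entry of the new incidence matrices exceeds both the vertex count and a threshold $h_i$ tied to $(p_n)$ through \eqref{equation:growthh's}. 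Only then can one impose the specific order of Lemma~\ref{lema:injective}, which simultaneously delivers recognizability (hence conjugacy, so $K^0$ is preserved) and the low repetition complexity needed for the linear bound. Your ``padding/spacer'' instinct points in this direction, but the decisive idea---that one must replace a block-constant bound by a linear-in-$\ell$ bound, and must manufacture morphisms with controlled repetition complexity to get it---is missing.
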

In terms of strong orbit equivalence \cref{theo:main1} can be restated as:
\begin{theorem}\label{theo:main2}
Let $(X,T)$ be a Cantor minimal system and let $p_n$ be a sequence of positive real numbers such that $ \lim n/p_n= 0$. Then there exists a minimal subshift $(Y,S)$ such that 
\begin{enumerate}
    \item (Y,S) and (X,T) are strong orbit equivalent.
    \item The complexity of $(Y,S)$ satisfies $\lim p_{Y}(n)/p_n=0$. 
\end{enumerate}
\end{theorem}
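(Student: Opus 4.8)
The plan is to derive Theorem~\ref{theo:main2} as a direct consequence of Theorem~\ref{theo:main1} together with the Giordano--Putnam--Skau classification of strong orbit equivalence. Recall that to every Cantor minimal system $(X,T)$ one attaches its dimension group with unit $K^0(X,T)=(G,G^+,u)$, realized as the quotient of $C(X,\Z)$ by the subgroup of coboundaries $\{f-f\circ T:f\in C(X,\Z)\}$, with positive cone given by the classes of nonnegative functions and distinguished unit the class of the constant function $1$. The first point I would check is that minimality of $(X,T)$ forces $(G,G^+,u)$ to be a \emph{simple} dimension group with unit. This is classical (see \cite{HPS,GPS}): minimality is precisely what allows one to choose a simple Bratteli--Vershik representation of $(X,T)$, equivalently it makes $K^0(X,T)$ have no nontrivial order ideal, which is the definition of simplicity.

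Granting this, I would apply Theorem~\ref{theo:main1} to the simple dimension group with unit $(G,G^+,u):=K^0(X,T)$ and to the given sequence $(p_n)_{n\in\N}$, which satisfies $\lim n/p_n=0$ by hypothesis. This yields a minimal subshift $(Y,S)$ with $K^0(Y,S)$ order isomorphic to $(G,G^+,u)=K^0(X,T)$ as ordered groups with unit, and with $\lim p_Y(n)/p_n=0$. The second of these conclusions is exactly item~(2) of Theorem~\ref{theo:main2}, so nothing further is needed there.

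For item~(1), I would invoke the theorem of Giordano, Putnam and Skau: two Cantor minimal systems are strongly orbit equivalent if and only if their dimension groups with unit are order isomorphic. A subshift is in particular a Cantor minimal system, so from $K^0(Y,S)\cong K^0(X,T)$ we conclude that $(Y,S)$ and $(X,T)$ are strongly orbit equivalent. This completes the deduction.

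I do not expect a genuine obstacle in this argument: all the real work lies in Theorem~\ref{theo:main1}, whose proof presumably builds $(Y,S)$ from a carefully chosen (simple, properly ordered) Bratteli diagram whose dimension group is $(G,G^+,u)$, with tower/incidence data sparse enough to keep $p_Y$ below $p_n$ while still coding a subshift. The only points requiring (routine) care in the reduction are the verification that the dimension group of an arbitrary Cantor minimal system is simple with unit, so that Theorem~\ref{theo:main1} applies to it, and the identification of the invariant $K^0$ of a minimal subshift with the dimension group that enters the Giordano--Putnam--Skau theorem.
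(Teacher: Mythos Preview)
Your proposal is correct and follows essentially the same approach as the paper: take $(G,G^+,u)=K^0(X,T)$, apply Theorem~\ref{theo:main1}, and conclude strong orbit equivalence via the Giordano--Putnam--Skau characterization (Theorem~\ref{theo:caracterizationoe}). The paper's proof is a two-line version of exactly this reduction.
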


With some extra conditions on the dimension group,  we can build a Toeplitz subshift having low complexity.  

\begin{theorem} \label{thm:divisible_toeplitz}
Let $(G,G^+,u)$ be a simple dimension group with unit, and let $p_n$ be a sequence of positive real numbers such that $ \lim n/p_n=0$. Assume that $G$ is a divisible group. Then there exists a minimal Toeplitz subshift $(Y,S)$ such that 
\begin{enumerate}
    \item $K^0(Y,S)$ and $(G,G^+,u)$ are isomorphic as ordered groups with unit.
    \item The complexity of $(Y,S)$ satisfies $\lim p_{Y}(n)/p_n=0$. 
\end{enumerate} 
\end{theorem}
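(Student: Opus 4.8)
The plan is to reduce this statement to the machinery already established for Theorem \ref{theo:main1}, exploiting the two classical facts that (a) a minimal subshift is a Toeplitz subshift exactly when it is an almost 1-1 extension of an odometer (equivalently, it admits a Bratteli--Vershik representation which is ``equal row sum'' / properly ordered with a single minimal and a single maximal path and whose associated odometer factor is the natural one), and (b) the dimension group of an odometer is a dense subgroup of $\QQ$ of the form $\Z[1/\mathfrak{n}]$ for a supernatural number $\mathfrak{n}$; more to the point, a simple dimension group $(G,G^+,u)$ with $G$ divisible forces $G$ to have a particularly amenable structure for Bratteli diagram surgery. So first I would recall the construction underlying \cref{theo:main1}: it produces $(Y,S)$ as a Bratteli--Vershik system from a carefully chosen ordered Bratteli diagram $B$ realizing $(G,G^+,u)$, where the number of vertices per level and the incidence matrices are controlled so that the induced subshift has complexity $o(p_n)$. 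The key extra input needed here is to arrange, simultaneously, that $B$ can be telescoped/modified to be \emph{properly ordered} with the odometer property, i.e. that the resulting subshift is Toeplitz, \emph{without} destroying either the dimension group or the complexity bound.

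Concretely, the steps I would carry out are: (1) Use divisibility of $G$ to write $(G, G^+, u)$ as an inductive limit of groups $\Z^{d_n}$ with incidence matrices $M_n$ that are strictly positive and whose column sums can be prescribed — in particular one can force each $M_n$ to have all rows summing to the same value (``equal row sum''), which is the combinatorial signature of an odometer-type common factor; divisibility is exactly what lets us absorb denominators so that a uniform row-sum normalization is achievable at each stage by telescoping. (2) Put a proper order on the resulting Bratteli diagram $B$ — pick compatible orderings of the edges entering each vertex so that there is a unique $B$-maximal and a unique $B$-minimal infinite path, using the standard trick of a distinguished ``first'' and ``last'' vertex at each level that the max/min path runs through; equal row sums guarantee this can be done compatibly and that the Vershik map is a homeomorphism. (3) Check the resulting system $(Y,S)$ is a subshift: feed the properly ordered diagram into the same coding/renaming procedure used in \cref{theo:main1} (reading off the finite path to the base as a symbol), so that $Y$ is conjugate to a minimal subshift, and the almost 1-1 odometer factor given by the equal row sums shows $(Y,S)$ is Toeplitz. (4) Re-run the complexity estimate: since the complexity bound in \cref{theo:main1} depends only on the sizes $d_n$ of the levels and the growth of the partial products of the $M_n$ (the ``heights'' of the Kakutani--Rokhlin towers), and since telescoping and choosing orderings only coarsen the diagram in a controlled way, one inserts enough ``spacer'' telescoping steps between the levels forced by $G$ to guarantee $h_{n+1}/h_n$ and $d_n$ grow slowly enough that $p_Y(n) = o(p_n)$, exactly as in the proof of \cref{theo:main1}.

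The main obstacle I expect is step (1)–(2): reconciling the rigidity of ``properly ordered with a unique max and min path realizing an odometer factor'' (needed for Toeplitz-ness) with the freedom required to realize an \emph{arbitrary} divisible simple dimension group and, at the same time, to slow down the complexity. The equal-row-sum condition is the crux — a general simple dimension group need not be presentable by equal-row-sum incidence matrices, and this is precisely where divisibility of $G$ is used (and why the hypothesis appears): divisibility lets us, after telescoping, multiply through by suitable integers to equalize row sums at the cost of enlarging entries, and then the ``spacer'' telescoping steps from step (4) are used to re-dilute those enlarged entries so the complexity stays $o(p_n)$. Balancing these two competing telescoping pressures — enough telescoping to equalize row sums, but with long enough slowly-growing stretches in between to keep complexity low — is the delicate bookkeeping at the heart of the argument, and I would isolate it as a lemma on ``equal-row-sum presentations of divisible simple dimension groups with prescribed slow height growth'' before assembling the theorem.
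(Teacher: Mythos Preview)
Your proposal is correct in outline and follows essentially the same route as the paper: use divisibility to obtain an equal-row-sum (ERS) Bratteli presentation of $(G,G^+,u)$, order it as in \cref{lema:injective} so that the associated $\cS$-adic subshift is conjugate to the Bratteli--Vershik system (hence Toeplitz by \cref{teo:ers}), and control the growth so that \cref{prop:Complexitybound} yields $p_Y(n)/p_n\to 0$.

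Two points where your described mechanism diverges from what the paper actually does and where your version, as written, would not quite work. First, the ERS normalization is not ``multiply through by suitable integers'': the paper conjugates by diagonal \emph{rational} matrices $J_i$ so that $B_{i-1}=J_iA_{i-1}J_{i-1}^{-1}$ has constant row sums, works in $\varinjlim_i(\QQ^{m_i},A_i)$ (which equals $G$ by divisibility, \cref{prop:divisible1}), and then scales by integers $k_i$ divisible by $i!$ so that $\varinjlim_i(\Z^{m_i},B_i)$ is again divisible and hence recovers $G$ (\cref{prop:divisible2}). Integer multiplication alone cannot equalize row sums. Second, your ``spacer telescoping to re-dilute enlarged entries'' has the sign backwards: enlarging the entries (i.e.\ choosing $k_i$ large) is exactly what \emph{helps} the complexity bound, since ERS gives $\|\tau_{[0,i)}\|=\langle\tau_{[0,i)}\rangle=k_1\cdots k_{i+1}$, and one simply picks $k_{i+1}$ large enough that $m_{i+2}^3 n/p_n$ is small for all $n\geq k_{i+1}$. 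There is no competing pressure to keep entries small and no dilution step; the bookkeeping is simpler than you anticipate.
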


\subsection*{Invariant measures}
The Choquet simplex of invariant measures of a Cantor minimal system is affine homeomorphic to the set of {\it traces} of the dimension group of the system \cite{HPS} (see \cref{subsec:dg} for precise definitions). Thanks to the characterization of strong orbit equivalence through the dimension group \cite{GPS}, this implies that the simplex of invariant measures, modulo affine homeomorphism, is preserved under strong orbit equivalence. Many works have been developed to study the properties of the set of invariant measures of a Cantor minimal system and the connections between those properties and some dynamical properties of the system \cite{Wi, Do91, bezu1, bezu2}. In \cite{Do91} Downarowicz showed that a Toeplitz subshift may have the affine structure of an arbitrary Choquet simplex. This realization problem has been later extended to more general group actions on the Cantor space \cite{CP14}, \cite{CC19}.

Connections between the complexity of a subshift and its set of invariant measures have been explored as well. By a theorem of Boshernitzan \cite{Bosh}, a subshift having a non-superlinear complexity has finitely many ergodic measures. This can also be seen as a consequence of the fact that such systems are of finite rank \cite{DDMP20}. Some recent works have been devoted to getting better upper bounds to the number of ergodic and generic measures \cite{cyrkra1,DF:2017,DOP:2020}. Recently Cyr and Kra \cite{cyrkra2} showed that for any superlinear sequence $(p_n)_{n\in \N}$ there exists a minimal subshift $(X,S)$ such that $\liminf p_X(n)/p_n=0$ having uncountably many ergodic measures (see \cref{theo:ck2}).  As a consequence of \cref{thm:divisible_toeplitz} we generalize this result obtaining that any Choquet simplex can be realized as the set of invariant measures of a Toeplitz subshift with arbitrarily low, superlinear complexity. This can also be seen as a generalization of the result of Downarowicz \cite{Do91} in the sense that Toeplitz subshifts can not only realize any Choquet simplex as set of invariant measures, but they can do it having an arbitrarily low superlinear complexity.

\begin{theorem}\label{theo:main3} 
Let $K$ be a Choquet simplex. Let $(p_n)_{n\in \N}$ be a sequence of positive real numbers such that $ \lim n/p_n=0$. Then, there exists a Toeplitz subshift $(X,S)$ whose complexity $p_X$ satisfies $\lim p_{X}(n)/p_n=0$
and whose set of invariant measures $\cM(X,S)$ is affine homeomorphic to $K$. 
\end{theorem}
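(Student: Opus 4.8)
The plan is to derive \cref{theo:main3} from \cref{thm:divisible_toeplitz}. Since $\cM(X,S)$ is, for any Cantor minimal system, affine homeomorphic to the trace space of its dimension group $K^0(X,S)$ (see \cite{HPS}), it suffices to produce, for the given metrizable Choquet simplex $K$, a \emph{divisible} simple dimension group with unit whose trace space is affine homeomorphic to $K$, and then feed this group together with the sequence $(p_n)$ into \cref{thm:divisible_toeplitz}.

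To build the group, let $\mathrm{Aff}(K)$ denote the space of continuous affine real functions on $K$, which is separable because $K$ is metrizable. Pick a countable $\QQ$-linear subspace $G\subseteq \mathrm{Aff}(K)$ that contains the constant function $1$ and is dense in the supremum norm, put $u=1$, and set
\[
G^+=\{\, g\in G : g(x)>0 \text{ for all } x\in K \,\}\cup\{0\}.
\]
Then $(G,G^+,u)$ is a countable ordered abelian group which is divisible (being a $\QQ$-vector space) and torsion free, hence unperforated; it is directed, since any $g\in G$ equals $(g+q\cdot 1)-q\cdot 1$ for a rational $q>0$ with $q>\max_K(-g)$; and it has the Riesz interpolation property because $K$ is a simplex — here one uses Edwards' separation theorem for $\mathrm{Aff}(K)$ together with the density of $G$ and the strict inequalities defining $G^+$ to place an interpolant inside $G$ (the degenerate cases with equalities being handled directly). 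Thus $(G,G^+,u)$ is a simple dimension group with unit, simple because the strict cone makes every nonzero element of $G^+$ an order unit. Finally, evaluation $x\mapsto \mathrm{ev}_x$, with $\mathrm{ev}_x(g)=g(x)$, is an affine homeomorphism from $K$ onto the trace space of $(G,G^+,u)$: every $\mathrm{ev}_x$ is a state, and conversely any state is continuous for the order-unit norm, hence extends to $\mathrm{Aff}(K)$ and corresponds to a point of $K$, since the state space of $\mathrm{Aff}(K)$ is canonically $K$.

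Now apply \cref{thm:divisible_toeplitz} to $(G,G^+,u)$ and $(p_n)$: it yields a minimal Toeplitz subshift $(X,S)$ with $K^0(X,S)$ order-isomorphic with unit to $(G,G^+,u)$ and with $\lim p_X(n)/p_n=0$. Then $\cM(X,S)$ is affine homeomorphic to the trace space of $K^0(X,S)$; since $K^0(X,S)\cong(G,G^+,u)$, this trace space is affine homeomorphic to $K$, and \cref{theo:main3} follows.

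I expect the only real obstacle to be the construction of the dimension group: all of ``countable'', ``dense in $\mathrm{Aff}(K)$'', ``$\QQ$-vector space'', and ``satisfies Riesz interpolation'' must hold simultaneously, so that the group is both divisible (as required by \cref{thm:divisible_toeplitz}) and a genuine dimension group carrying the whole simplex $K$ as its trace space. Verifying interpolation for a dense $\QQ$-subspace of $\mathrm{Aff}(K)$ with the strict order — equivalently, invoking a realization theorem for simple dimension groups with prescribed trace space in the spirit of Effros--Handelman--Shen and Goodearl and checking that it can be arranged divisible — is the one step that needs care; everything else is a formal consequence of the results already established.
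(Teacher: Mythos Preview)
Your proposal is correct and follows essentially the same route as the paper: choose a countable dense $\QQ$-vector subspace $G$ of $\Aff(K)$ containing $1$, equip it with the strict positive cone, note that it is divisible, and apply \cref{thm:divisible_toeplitz} together with the identification of $\cM(X,S)$ with the trace space of $K^0(X,S)$. The only difference is that the step you flag as delicate---showing that $(G,G^+,1)$ is a simple dimension group with trace space affinely homeomorphic to $K$---is not verified by hand in the paper but is obtained directly from \cref{theo:simplexanddg} (taken from \cite{effros}, formulated as in \cite{malen-hoynes}), so you can simply cite that result rather than reprove Riesz interpolation and the state-space computation.
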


\subsection{Methods and organization} Our main tools are the use of Bratteli-Vershik models for Cantor minimal systems, together with their associated representation as $\cS$-adic subshifts, which are systems obtained by performing an infinite composition of substitutions or morphisms (see \cref{sec:brattelidef} for precise definitions). Some estimations on the complexity of minimal $\cS$-adic subshifts have been obtained in \cite{DDMP20} and we use them to bound the complexity of the subshift that will realize a given simple dimension group with unit as its dimension group.

The document is organized as follows. In \cref{sec:basicdef} we give the basic background on symbolic dynamics and more specific facts about dimension groups.  In \cref{sec:brattelidef} we introduce the tools from Bratteli-Vershik diagrams and their associated $\mathcal{S}$-adic subshifts together with complexity results in this context. \cref{sec:proofs} is devoted to proving the main results.

Throughout the article, we let $\N, \N^{\ast}$ and $\Z$ denote the set of non-negative integers, the set of positive integers, and the set of integer numbers respectively. 

\section{Preliminaries}\label{sec:basicdef}

\subsection{Cantor minimal systems.}\label{subsec:cantorms}
Through this paper, a {\it topological dynamical system} or simply a {\it dynamical system} is a pair $(X,T)$ where $X$ is a compact metric space and $T$ is a homeomorphism from $X$ to itself. When $X$ is a {\it Cantor space}, that is, $X$ is a non-empty, metrizable, perfect and totally disconnected compact space, we say that $(X,T)$ is a {\it Cantor dynamical system}. A dynamical system $(X,T)$ is {\it minimal} if it has no non-trivial closed $T$-invariant subsets. Equivalently, $(X,T)$ is minimal if for all $x\in X$, the {\it orbit} of $x$ under $T$, $O_T(x):=\{T^k(x)\colon k\in\Z\}$, is dense in $X$. If $X$ is a Cantor space and $(X,T)$ is minimal, $(X,T)$ is called a {\it Cantor minimal system}. A dynamical system $(X,T)$ is {\it expansive} if there exists $c>0$ such that for every $x\neq y$, $\sup_k d(T^k(x),T^k(y))>c$, where $d$ is a metric on $X$. %Expansiveness does not depend on the metric $d$, although the {\it expansiveness constant} $c$ does.
Two topological dynamical systems $(X_1,T_1)$ and $(X_2,T_2)$ are {\em conjugate} if there exists a homeomorphism $h\colon X_1\to X_2$ such that $h(T_1(x))=T_2(h(x))$ for all $x\in X_1$. 

Given a dynamical system $(X,T)$, let $C(X,\R)$ denote the additive group of continuous functions from $X$ to $\R$. The {\it coboundary map} $\beta\colon C(X,\R)\to C(X,\R)$ is defined by $\beta f\coloneqq f\circ T - f$.

\subsection{Subshifts and complexity}\label{subsec:subshifts}
Let $\cA$ be a finite non-empty set with cardinality $|\cA|$, which we call an {\it alphabet}.  The free monoid $\cA^{\star}$ is the set of all concatenations of symbols of $\cA$ (called {\it words}) including the empty word, which we denote $\varepsilon$. For $w \in \cA^{\ast}$, $|w|$ stands for the length of $w$, that is, the total number of letters appearing on $w$. We use the convention $|\varepsilon|=0$. The sets $\cA^\Z$ and $\cA^\N$ endowed with the product topology of the discrete topology on each copy of $\cA$ are compact metric spaces. The elements of $\cA^\Z$ and $\cA^\N$ are sometimes called {\it infinite words} on $\cA$. A {\it factor} of a finite word $w\in\cA^{\ast}$ is defined as a finite concatenation of some consecutive letters occurring in $w$. A factor of an infinite word is defined in the same way. The set of factors $\cL_x$ of $x\in \cA^\Z$ or $\cA^\N$ is called its {\it language}. If $|\cA|\geq 2$, the space $\cA^\Z$ is a Cantor space.

Let $S\colon \cA^\Z\to \cA^\Z$ denote the {\it shift} transformation, which is defined by $S((x_i)_{i \in \Z})=(x_{i+1})_{i \in \Z}$. A {\it subshift} on $\cA$ is a dynamical system given by a pair $(X,S|_X)$, where $X$ is a closed shift-invariant subset of $\cA^\Z$, endowed with the induced topology. We denote by $(X,S)$ the system $(X,S|_X)$ to avoid an overcharged notation. We use the term {\it subshift} indistinctly to name both the space $X\subseteq \cA^\Z$ and the dynamical system $(X,S)$. Given any infinite word $x$ in $\cA^\Z$ (or in $\cA^\N$), we define $(X_x,S)$ to be the {\it subshift generated by} $x$. That is,   
$$X_x=\{y\in \cA^\Z: \forall w, w \prec y \implies  w\prec x \},$$
where the symbol $\prec$ means ``is a factor of". Equivalently, $X_x$ is the closure of the orbit of $x$ under the action of the shift $S$.
If $(X,S)$ is a subshift on $\cA$, its {\it language} $\cL_X$ is defined as the set of factors of elements of $X$.

The {\it factor complexity} or simply the {\it complexity} of a subshift $(X,S)$ is the function $p_X\colon \N\to\N$ given by the number of factors of length $n$ in $\cL_X$. A subshift $(X,S)$ is said to have {\it non-superlinear factor complexity} if 
\[ \liminf \frac{p_X(n)}{n} < +\infty. \]

\subsection{Toeplitz subshifts}\label{subsec:toeplitz}
An infinite word $(x_i)_{i\in\Z}$ with symbols in the alphabet $\cA$ is said to be a {\it Toeplitz sequence} if for all $i\in\N$ there exists $p\in\N$ such that for all $k\in \Z$, $x_i=x_{i+kp}$. A subshift $(X,S)$ is called a {\it Toeplitz subshift} if $X$ is the orbit  closure under the shift of some Toeplitz sequence. Toeplitz subshifts were originally introduced in \cite{JK}. They are known to be minimal (see for example \cite{Do95}). 

\subsection{Invariant measures.}\label{subsec:im}
Given a topological dynamical system $(X,T)$, an {\it invariant measure} of $(X,T)$ is a Borel probability measure on $X$ such that for every Borel subset $A\subseteq X$, $\mu(T^{-1}(A))=\mu(A)$. The set of all invariant measures of $(X,T)$ is denoted $\cM(X,T)$. We say that a measure $\mu\in\cM(X,T)$ is {\it ergodic} if whenever $T(A)=A$ for some Borel set $A\subseteq X$, either $\mu(A)=0$ or $\mu(A)=1$. We say that $(X,T)$ is {\it uniquely ergodic} if $\cM(X,T)$ consists of one element. The set $\cM(X,T)$ is known to be non-empty and a {\it Choquet simplex}, that is, a compact convex metrizable subset $K$ of a locally convex real vector space such that for each $v\in K$ there is a unique probability measure $m$ supported on the extreme points of $K$ with $\int_{ext(K)}xdm(x)=v$. The extreme points of $\cM(X,T)$ are exactly the ergodic measures on $X$ (see for example \cite{Glasner}). Here, $\cM(X,T)$ is considered as a subspace of the dual $C(X,\R)$ endowed with the weak$^\star$ topology.

The following result, due to Downarowicz \cite{Do91} tells us that Toeplitz subshifts can realize any Choquet simplex as its set of invariant measures (see also \cite[Theorem 12]{noruegos}).
\begin{theorem}\cite[Theorem 5]{Do91}\label{theo:do91}
Let $K$ be a Choquet simplex. There exists a Toeplitz subshift $(X,S)$ such that $\cM(X,S)$ is affine homeomorphic to $K$.
\end{theorem}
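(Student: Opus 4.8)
The plan is to build an ordered Bratteli diagram whose Bratteli--Vershik system is an expansive, almost one-to-one extension of an odometer --- hence a Toeplitz subshift --- and whose simplex of invariant measures is affinely homeomorphic to $K$. The Toeplitz condition will be encoded as a combinatorial constraint on the incidence matrices, while the simplex $K$ will appear as an inverse limit of the measure simplices at each level.

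First I would invoke the structural description of metrizable Choquet simplices (Lazar--Lindenstrauss): every such $K$ is an inverse limit $K\cong\varprojlim(\Delta_{k_n},B_n)$ of finite-dimensional simplices $\Delta_{k_n}$ along affine surjections $B_n\colon\Delta_{k_{n+1}}\to\Delta_{k_n}$, each given by a column-stochastic matrix. After telescoping and a small perturbation I would arrange that every $B_n$ has strictly positive rational entries; positivity does not change the inverse limit up to affine homeomorphism and will later force minimality, while rationality lets me clear denominators. Writing $B_n=\frac{1}{c_n}F_n^{\mathsf{T}}$ with $c_n$ a common denominator and $F_n(v,w)=c_nB_n(w,v)$ a nonnegative integer matrix, the column-stochasticity of $B_n$ translates exactly into the statement that each $F_n$ has constant row sum $c_n$.

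Next I would take the $F_n$ as the incidence matrices of a Bratteli diagram $B$ and equip $B$ with a proper order, so the Vershik map $\phi$ is a well-defined minimal homeomorphism; primitivity of the $F_n$ (from strict positivity) guarantees minimality and simplicity of the associated dimension group. The constant-row-sum condition is precisely the equal path number property: the number of paths $h_n$ from the root to any level-$n$ vertex is independent of the vertex, with $h_{n+1}=c_nh_n$. By the Gjerde--Johansen description of Toeplitz flows this makes $(X_B,\phi)$ an almost one-to-one extension of the $(h_n)$-adic odometer. A direct computation with the tower measures shows that an invariant measure corresponds to a coherent sequence of probability vectors $\nu_n\in\Delta_{k_n}$ with $\nu_n=A_n\nu_{n+1}$, where $A_n(w,v)=F_n(v,w)/c_n=B_n(w,v)$; hence $\cM(X_B,\phi)\cong\varprojlim(\Delta_{k_n},B_n)\cong K$ affinely.

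It remains to pass from the Cantor system $(X_B,\phi)$ to an honest subshift, i.e.\ to ensure expansiveness, since an expansive almost one-to-one extension of an odometer is exactly a Toeplitz subshift. This is the delicate point, and the main obstacle. I would secure it by refining the diagram and the order --- telescoping and, if necessary, enlarging the symbol set by labelling the level-$n$ towers so that their spellings separate distinct infinite paths within a bounded window --- while checking that these modifications alter neither the equal path number property nor the limit $\varprojlim(\Delta_{k_n},B_n)$, so that the invariant measure simplex stays equal to $K$. Reconciling expansiveness with the simultaneous demands of equal path numbers, primitivity, and the prescribed inverse-limit simplex is the combinatorial heart of the construction (equivalently, in Downarowicz's original language, the explicit design of a Toeplitz array whose pattern of filled and unfilled positions yields word frequencies realizing $\varprojlim(\Delta_{k_n},B_n)$). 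Once expansiveness is in place, reading symbolic names along orbits conjugates $(X_B,\phi)$ to a Toeplitz subshift $(X,S)$ with $\cM(X,S)$ affine homeomorphic to $K$, completing the proof.
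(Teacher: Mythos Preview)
The paper does not prove \cref{theo:do91}; it is simply quoted from \cite{Do91} (see also \cite[Theorem 12]{noruegos}) as background. So there is no ``paper's own proof'' of this particular statement to compare against. What the paper \emph{does} prove is the strictly stronger \cref{theo:main3}, and it is instructive to compare your sketch with that argument.

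Your route and the paper's route to a Toeplitz realization share the same skeleton: build a simple Bratteli diagram whose incidence matrices have the equal row sum (ERS) property, order it properly, and invoke the Gjerde--Johansen characterization (\cref{teo:ers}) so that the resulting Bratteli--Vershik system is a Toeplitz flow. The difference is in how $K$ is encoded. You go directly through the Lazar--Lindenstrauss inverse-limit description $K\cong\varprojlim(\Delta_{k_n},B_n)$ and read off ERS integer matrices by clearing denominators of the column-stochastic $B_n$; the invariant-measure simplex is then identified with the inverse limit of level measures. The paper instead passes through dimension groups: it chooses a countable dense $\QQ$-vector subspace $G\subseteq\Aff(K)$ containing $1$, invokes \cref{theo:simplexanddg} to obtain a simple divisible dimension group with $S(G,G^+,1)\cong K$, and then realizes $(G,G^+,1)$ as $K^0$ of a Toeplitz subshift via \cref{thm:divisible_toeplitz}; the measure simplex is recovered through $\cM(X,S)\cong S(K^0(X,S))$. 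Your approach is closer to Downarowicz's original, and is more direct for the bare statement; the paper's detour through divisible dimension groups is what allows it to simultaneously control complexity.

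On the expansiveness step, which you correctly flag as the crux: your description (``enlarging the symbol set by labelling the level-$n$ towers so that their spellings separate distinct infinite paths'') is too vague to count as a proof. The paper handles exactly this issue concretely in \cref{lema:injective}: provided the first column of each incidence matrix is large enough, one can choose an order so that the morphisms $\tau_i$ read on the diagram are proper and injective on $X_{\boldsymbol\tau}^{(i+1)}$, and then \cref{prop:isom} gives the conjugacy between the Bratteli--Vershik system and the $\cS$-adic subshift. That lemma (or the analogous device in \cite{noruegos}) is precisely the missing ingredient in your sketch; without it the argument is incomplete.
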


\subsubsection{Invariant measures and complexity}\label{subsec:imandcomplexity}
The following condition, called the Boshernitzan condition, states that the factor complexity of a minimal subshift restricts the possible affine structure of its set of invariant measures. More precisely, it states that minimal subshifts with non-superlinear factor complexity have a finite number of ergodic measures.
\begin{theorem}\cite[Corollary 1.3]{Bosh}\label{theo:boshcond}
Let $(X,S)$ be a minimal subshift and let $p_X$ denote the factor complexity function of $X$. If
$$\liminf_{n\to\infty}\frac{p_X(n)}{n}<\alpha<+\infty,$$
then the number of ergodic invariant probability measures of $(X,S)$ is at most $\max(\lfloor\alpha\rfloor,1)$, where $\lfloor\alpha\rfloor$ denotes the greatest integer which is smaller than $\alpha$.
\end{theorem}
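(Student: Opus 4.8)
The plan is to prove the quantitative statement that a minimal subshift carrying $k$ pairwise distinct ergodic measures must satisfy $\liminf_n p_X(n)/n \ge k$; the stated bound follows immediately, since $\liminf_n p_X(n)/n < \alpha$ then forces the integer $k$ to satisfy $k < \alpha$, hence $k \le \max(\lfloor\alpha\rfloor,1)$. The maximum with $1$ absorbs the degenerate situation: $\cM(X,S)$ is always non-empty, so $k\ge 1$, and if $X$ is finite it is a single periodic orbit and is uniquely ergodic. I may therefore assume $X$ is infinite, so that by Morse--Hedlund $p_X(n)\ge n+1$ for all $n$, fix distinct ergodic measures $\mu_1,\dots,\mu_k$, and aim to show $p_X(n)\ge (k-o(1))\,n$ for every large $n$.

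First I would localize the measures at a fixed scale. Finitely many pairwise singular measures have disjoint supports, so there is a Borel partition $X=\bigsqcup_i A_i$ (up to null sets) with $\mu_i(A_i)=1$; approximating each $A_i$ by unions of length-$\ell$ cylinders and using that these generate the Borel $\sigma$-algebra, for any $\e>0$ I obtain a length $\ell$ and \emph{pairwise disjoint} families of words $C_1,\dots,C_k\subseteq \cL_X\cap\cA^\ell$ with $\mu_i\big(\bigcup_{w\in C_i}[w]\big)>1-\e$ for each $i$. Thus $C_i$ is private to $\mu_i$. Next, fixing for each $i$ a point $x^{(i)}$ generic for $\mu_i$ (ergodic theorem), I call a word $u\in\cL_X$ of length $n$ \emph{$i$-heavy} if at least a $(1-2\e)$-fraction of its $n-\ell+1$ factors of length $\ell$ lie in $C_i$. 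Genericity guarantees that for all large $n$ almost every length-$n$ window of $x^{(i)}$ is $i$-heavy, so the set $H_i\subseteq \cL_X\cap\cA^n$ of $i$-heavy words is non-empty; and since the $C_i$ are disjoint, for $\e<1/4$ no word is both $i$-heavy and $j$-heavy, so the $H_i$ are pairwise disjoint and $p_X(n)\ge \sum_{i=1}^{k}|H_i|$.

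The crux, and the step I expect to be the main obstacle, is to show that each measure contributes not merely one but asymptotically $n$ distinct heavy words, that is $|H_i|\ge (1-o(1))\,n$. The mechanism I would use is a Morse--Hedlund argument localized by the measure: the long runs of $C_i$-blocks in $x^{(i)}$ form, after recoding, an aperiodic pattern (aperiodicity being forced because the infinite minimal system $X$ has no periodic points), and an aperiodic pattern has at least $n+1$ distinct factors of length $n$, essentially all of which are $i$-heavy. Making this rigorous is delicate: one must control the boundary effects between heavy and non-heavy regions, ensure the localized count produces order-$n$ genuinely \emph{distinct} factors rather than repetitions, and track the $\e$-losses so that the constant converges to the sharp value $k$ and not to something strictly smaller. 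This is exactly where the sharp threshold is earned; note that the much softer bound obtained by counting non-negative Kirchhoff flows on the Rauzy graph only controls $\liminf_n(p_X(n+1)-p_X(n))$ and is too weak even to separate the uniquely ergodic case from the two-measure case.

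Finally I would assemble the estimate. Combining $p_X(n)\ge\sum_i |H_i|\ge k(1-o(1))\,n$, valid for every sufficiently large $n$, gives $\liminf_n p_X(n)/n\ge k(1-c\e)$ for the fixed $\e$; letting $\e\to 0$ yields $\liminf_n p_X(n)/n\ge k$. Inserting the hypothesis $\liminf_n p_X(n)/n<\alpha<+\infty$ gives $k<\alpha$, and since $k$ is a positive integer this is precisely $k\le\max(\lfloor\alpha\rfloor,1)$, which completes the argument.
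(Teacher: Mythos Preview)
The paper does not prove this statement: \cref{theo:boshcond} is quoted from \cite{Bosh} as background in \cref{subsec:imandcomplexity}, with no argument supplied. There is therefore no proof in the paper against which to compare your proposal.

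Assessed on its own, your reduction to the inequality $\liminf_n p_X(n)/n\ge k$ is correct, and the separation of the $k$ ergodic measures by disjoint families $C_1,\dots,C_k\subseteq\cA^{\ell}$ carrying most of each mass is standard and sound; the resulting disjointness of the heavy sets $H_i$ for small $\e$ is fine. The gap is exactly where you yourself flag it: the claim $|H_i|\ge(1-o(1))\,n$. Your suggested ``localized Morse--Hedlund'' mechanism is only a heuristic here. Morse--Hedlund applied to the aperiodic generic point $x^{(i)}$ yields at least $n+1$ distinct length-$n$ factors of $x^{(i)}$, but says nothing about how many of them are $i$-heavy; and recoding the $C_i$-runs into a new alphabet does not obviously produce an aperiodic sequence whose length-$n$ factor count transfers back to a count of distinct $i$-heavy words in $\cL_X$ without precisely the boundary and repetition losses you list. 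You name the obstacles (``boundary effects'', ``genuinely distinct factors rather than repetitions'', ``track the $\e$-losses'') but resolve none of them, so what you have is a plausible plan whose central lemma is missing rather than a proof. To close it you would need an independent reason why a single ergodic measure forces order-$n$ many distinct length-$n$ words in its ``typical'' set---in Boshernitzan's argument this comes from an upper bound on the $\mu_i$-measure of any individual length-$n$ cylinder, not from a factor-counting argument of Morse--Hedlund type.
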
 

Cyr and Kra showed that any subshift (not necessarily minimal) with linear factor complexity has at most finitely many nonatomic ergodic invariant probability measures, and so at most countably many ergodic invariant probability measures \cite{cyrkra1}. In \cite{cyrkra2}, the authors show that it is possible to realize any arbitrarily low superlinear factor complexity with a minimal subshift having uncountably many ergodic invariant probability measures. More precisely, they prove the following theorem.
\begin{theorem}\cite[Theorem 1.1]{cyrkra2}\label{theo:ck2}
If $(p_n)_{n\in\N}$ is a sequence of natural numbers such that 
$$\liminf_{n\to\infty}\frac{p_n}{n}=\infty,$$ 
then there exists a minimal subshift $(X,S)$ which supports uncountably many ergodic measures and is such that 
$$\liminf_{n\to\infty}\frac{p_X(n)}{p_n}=0.$$
\end{theorem}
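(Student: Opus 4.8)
Note first that for a sequence of positive reals the condition $\liminf_{n}p_n/n=\infty$ is literally the same as $\lim n/p_n=0$ (and a sequence of natural numbers with this property is eventually positive), so the hypothesis here is exactly the hypothesis of \cref{theo:main1,theo:main3}. Hence \cref{theo:ck2} is a special case of \cref{theo:main3}: take $K$ to be any Choquet simplex with uncountably many extreme points --- the simplex of Borel probability measures on a Cantor set will do, its extreme points being the Dirac masses --- and apply \cref{theo:main3}. One obtains a minimal (indeed Toeplitz) subshift $(X,S)$ with $\cM(X,S)$ affine homeomorphic to $K$, hence carrying uncountably many ergodic measures, and with $\lim p_X(n)/p_n=0$, which is even stronger than the asserted $\liminf p_X(n)/p_n=0$. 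So the real task is to prove statements of the form of \cref{theo:main1} and \cref{theo:main3}; I sketch the plan for those.

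The approach is through Bratteli--Vershik models and their $\cS$-adic codings. One represents the target --- a simple dimension group with unit, equivalently its Choquet simplex of traces --- as an inverse limit of finite positive cones under a sequence of nonnegative integer incidence matrices $M_k$, and then realizes this diagram symbolically: the vertices at level $k$ become finite words over an alphabet $\cA_k$, obtained as images of letters under a composition $\sigma_1\circ\cdots\circ\sigma_{k-1}$ of morphisms whose incidence matrices are the $M_k$. Standard primitivity and properness conditions on the $\sigma_k$ make the resulting $\cS$-adic subshift $(Y,S)$ minimal with $K^0(Y,S)$ order-isomorphic to the prescribed group; taking the $\sigma_k$ left-proper and (essentially) of constant length yields a Toeplitz subshift, which is what \cref{thm:divisible_toeplitz} and \cref{theo:main3} need. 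For uncountably many ergodic measures one must in particular let the ranks $|\cA_k|$ tend to infinity and choose the $M_k$ so that the inverse limit of simplices is infinite dimensional --- bounded rank would force finitely many ergodic measures --- but this is automatic once $K$ is fixed with uncountably many extreme points.

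The complexity estimate is where the work lies. For a minimal $\cS$-adic subshift there is a bound, on the scale of the level-$k$ word lengths $\ell_k=\max_{a\in\cA_k}|\sigma_1\cdots\sigma_{k-1}(a)|$, of the shape $p_Y(n)\le c_k\,n$ valid for $n$ roughly up to $\ell_{k+1}$, with $c_k$ depending only on the data already fixed at level $k$ (this is the type of estimate obtained in \cite{DDMP20}). The plan is a layered argument: interleave the ``structural'' stages realizing the matrices $M_k$ with long ``padding'' stages that inflate $\ell_k$ without enlarging the alphabet or disturbing the dimension group; because $p_n/n\to\infty$, one can make the $\ell_k$ grow fast enough that $c_k\,n\le p_n/k$ on the whole window $(\ell_k,\ell_{k+1}]$, whence $p_Y(n)/p_n\le 1/k$ there, and $\lim p_Y(n)/p_n=0$ follows.

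I expect two intertwined difficulties to be the crux. First, the linear bound $p_Y(n)\le c_k\,n$ must hold throughout $(\ell_k,\ell_{k+1}]$, not merely near $n\approx\ell_k$: a factor of length $n$ with $\ell_k\ll n\ll\ell_{k+1}$ spans many consecutive level-$k$ words, and a crude count would grow like $|\cA_k|^{\,n/\ell_k}$, so the level-$(k+1)$ words must be assembled from level-$k$ words in a highly structured, almost-periodic (Toeplitz-type) fashion --- long repeated blocks with rare, controlled defects. Second, this rigidity must coexist with realizing an \emph{arbitrary} simple dimension group with unit, where the $M_k$ are dictated by $G$: the padding stages inflating $\ell_k$ must leave $G$ unchanged, which is immediate when $G$ is divisible (multiply by large integers) but requires more care otherwise --- presumably the reason the Toeplitz conclusion of \cref{thm:divisible_toeplitz} assumes divisibility. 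Reconciling ``enough freedom to reproduce $G$'' with ``enough rigidity to keep the complexity linear on each scale'', while also securing minimality and, when wanted, the Toeplitz property, is the heart of the matter.
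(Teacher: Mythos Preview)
Your derivation of \cref{theo:ck2} from \cref{theo:main3} by choosing $K$ with uncountably many extreme points is correct, and this is precisely how the paper positions the result (the Remark following \cref{theo:ck2} states that \cref{theo:main3} strengthens it); the paper gives no independent proof of \cref{theo:ck2} beyond citing \cite{cyrkra2}.

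Where your sketch of \cref{theo:main1}/\cref{theo:main3} departs from the paper is in the mechanism for controlling complexity. You propose interleaving ``structural'' stages with ``padding'' stages that inflate word lengths, and you worry that on the window $\ell_k\ll n\ll\ell_{k+1}$ a crude count gives $|\cA_k|^{n/\ell_k}$ factors, to be tamed by an almost-periodic assembly. The paper does neither. Growth of $\langle\tau_{[0,i)}\rangle$ is obtained simply by \emph{telescoping} the original diagram (composing consecutive $A_i$) until the minimal entries satisfy \eqref{equation:growthh's}; the dimension group is automatically preserved, so no separate ``padding without disturbing $G$'' is needed. On top of this, a level-splitting $A_i=B_iC_i$ followed by conjugation by explicit adapted matrices $J_i$ (\cref{lema:AdaptedMatrices}) produces a new diagram with the same $K_0$ but with entries divisible by the parameters $h_i$. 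The linear complexity bound on the \emph{entire} window $[\|\tau_{[0,i)}\|,\|\tau_{[0,i+1)}\|)$ is not engineered ad hoc but is the black-box estimate of \cref{theo:boundedrepetition}/\cref{prop:Complexitybound} from \cite{DDMP20}, whose slope depends only on the alphabet sizes and the \emph{repetition complexity} $\textrm{r-comp}(\tau_i)$, $\textrm{r-comp}(\tau_{i+1})$. The specific order of \cref{lema:injective} --- each $\tau_i(v_j)$ consisting of one maximal run per letter, with a single letter split in two to ensure injectivity --- forces $\textrm{r-comp}(\tau_i)\le |V''_{i+2}|^2$, after which the telescoping guarantees $|V''_i|^3\cdot n/p_n\to 0$. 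So the rigidity you anticipate is real, but it is encapsulated entirely in the low repetition complexity of this ordering, and the uniform linear bound across each scale is a theorem to be invoked rather than a construction to be redone.
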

\begin{Rem}
Theorem \ref{theo:main3} is a strengthening of Theorem \ref{theo:ck2} in two senses. The first one is that the conclusion is about the limit of $\frac{p_X(n)}{p_n}$ and not only about the liminf. Here it is worth mentioning that the behaviour of the complexity along different sequences could be very different, for instance, examples in \cite[Section 4]{DDMP16} show that it is possible to have $\liminf p_X(n)/p_n =0$ and $\limsup p_X(n)/p_n=+\infty$ for a superlinear sequence $(p_n)_{n\in \N}$.

The second strengthening is that we are able to realize not only uncountably many invariant ergodic measures, but any Choquet simplex of invariant measures, which at the same time implies that \cref{theo:main3} is a generalization of \cref{theo:do91}.
\end{Rem}
\subsection{Inductive limits.}\label{subsec:inductive}
For a sequence of group homomorphisms $(h_i\colon G_i\to G_{i+1})_{i\in\N}$, the associated {\it inductive limit} is defined by
$$\varinjlim_i(G_i,h_i)=\{(g,i)\in G_i\times \N\}/\sim,$$
where $(g,i)\sim (g',i')$ if and only if there exists $k\geq i,i'$ such that $h_k\circ\cdots\circ  h_i(g)=h_k\circ \cdots \circ h_{i'}(g')$. We denote by $[g,i]$ the equivalence class of the element $(g,i)$. The inductive limit $\varinjlim\limits_{i}(G_i,h_i)$ is also denoted
$$ G_0 \xrightarrow[]{h_0} G_1 \xrightarrow[]{h_1} G_2 \xrightarrow[]{h_2} \cdots, $$
and it has a group structure given by the product 
$$[g,i]\cdot [g',i']= [h_k\circ \cdots \circ h_i(g)\cdot h_k\circ \cdots \circ h_{i'}(g'),k+1],$$ where $k\geq i,i'$.

For a $m\times n$ integer matrix $A$, we consider the natural homomorphism it induces, {\it i.e.}, $A\colon \Z^{m}\to \Z^{n}$ (or $A\colon \QQ^{m}\to \QQ^{n}$ ), $x\mapsto Ax$. 

\subsection{Dimension groups.}\label{subsec:dg}
An {\it ordered group} is a pair $(G,G^+)$ where $G$ is a countable abelian group $G$ and $G^+$ is a subset of $G$, called the {\it positive cone}, satisfying
$$G^+ + G^+ \subset G^+, \quad  G^+ \cap (-G^+) = \{ 0 \}, \quad  G^+ - G^+ = G.
$$
We write $a\leq b$ if $b-a \in G^+$, and $ a<b$ if $b-a \in  G^+$ and $b \neq a$. Two ordered groups $(G,G^+)$ and $(H,H^+)$ are {\it isomorphic} if there exists a group isomorphism $\varphi\colon G\to H$ such that $\varphi(G^+)=H^+$. We also say that $G$ and $H$ are {\it order isomorphic} when the positive cones are clear from the context. An ordered group $(G,G^+)$ is a {\it dimension group} if for every $i\geq 0$ there exists an integer $d_i\geq 1$ and a positive homomorphism $A_i\colon \Z^{d_i}\to \Z^{d_{i+1}}$ such that $(G,G^+)$ is isomorphic to the inductive limit
$$ \Z^{d_0}  \xrightarrow[]{A_0} \Z^{d_1}  \xrightarrow[]{A_1} \Z^{d_2} \xrightarrow[]{A_2} \cdots. $$
endowed with the induced order. 
The dimension group is called {\it simple} if all matrices $A_i$ can be chosen to be positive. An {\it order unit} for $(G,G^+)$  is an element $u $ in $G^+$  such that, for all $a $ in $G$, there exists  some non-negative integer $k$ with $a  \leq ku$. An {\it ordered group with unit} is a triple $(G,G^+,u)$, where $(G,G^+)$ is an ordered group and $u$ is an order unit for $(G,G^+)$. A {\it dimension group with unit} is a triple $(G,G^+,u)$, where $(G,G^+)$ is a dimension group and $u$ is an order unit for $(G,G^+)$. If $(G,G^+)$ is isomorphic to the inductive limit 
$$ \Z^{d_0}  \xrightarrow[]{A_0} \Z^{d_1}  \xrightarrow[]{A_1} \Z^{d_2} \xrightarrow[]{A_2} \cdots $$
endowed with the induced order, any element $[x,i]$, where all the entries of the vector $x\in \Z^{d_i}$ are positive, is an order unit for $(G,G^+)$.
Two ordered groups with unit $(G,G^+,u)$ and $(H,H^+,v)$ are {\em isomorphic} if there exists a group isomorphism $\varphi \colon G \rightarrow H$ such that $\varphi(G^+)=H^+$ and $\varphi (u)=v$.

The following lemmas will be useful in the proofs of the main theorems. Recall that an abelian group $(G,+)$ is called {\it divisible} if for each $g\in G$ and each integer $k\geq 1$, there exists $h\in G$ such that $kh=g$. 

\begin{lemma}\label{prop:divisible2}
Let $(A_i)_{i\in\N}$ be a sequence of $d_{i+1}\times d_i$ integer matrices, let $G=\varinjlim\limits_{i}(\Z^{d_i},A_i)$. If for all $i\geq 1$, all the entries of $A_i$ are divisible by $i+1$, then $G$ is a divisible group.
\end{lemma}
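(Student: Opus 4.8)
The plan is to work directly with the description of the inductive limit. An arbitrary element of $G=\varinjlim_i(\Z^{d_i},A_i)$ has the form $[x,i]$ for some $i\in\N$ and $x\in\Z^{d_i}$, and two such representatives are identified exactly when they agree after applying finitely many of the connecting maps. To show $G$ is divisible, I fix $[x,i]\in G$ and an integer $k\geq 1$, and I must produce $[y,j]\in G$ with $k[y,j]=[x,i]$. The key observation is that, by hypothesis, for every $m\geq 1$ all entries of $A_m$ are divisible by $m+1$, so we may factor $A_m = (m+1)B_m$ for an integer matrix $B_m$ of the same size. Hence for a chain $A_{i+r-1}\circ\cdots\circ A_i$ of $r$ consecutive connecting maps (with $i\geq 1$), the composite equals $\bigl(\prod_{m=i}^{i+r-1}(m+1)\bigr)\,B_{i+r-1}\circ\cdots\circ B_i$, and the scalar $\prod_{m=i}^{i+r-1}(m+1)$ can be made divisible by any prescribed $k$ by taking $r$ large enough.

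Concretely, first I reduce to the case $i\geq 1$: since $[x,0]=[A_0x,1]$, we may assume the representative lives at a level $i\geq 1$. Then I choose $r$ large enough that $k$ divides $N:=\prod_{m=i}^{i+r-1}(m+1)$ — for instance any $r\geq k$ works, since among $k$ consecutive integers $m+1$ one is a multiple of $k$ — and I set $j=i+r$, $C = A_{i+r-1}\circ\cdots\circ A_i$, so that $[x,i]=[Cx,j]$. Writing $C=N\cdot D$ with $D$ an integer matrix and $N=k\cdot N'$, I put $y:=N' D x\in\Z^{d_j}$. Then $ky = kN'Dx = NDx = Cx$, whence $k[y,j]=[ky,j]=[Cx,j]=[x,i]$. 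This exhibits the required division, so $G$ is divisible.

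I do not expect any genuine obstacle here; the only point requiring a little care is the bookkeeping at level $0$ (where no divisibility on $A_0$ is assumed), which is handled by the trivial shift $[x,0]=[A_0x,1]$ before invoking the hypothesis. One should also note that the choice of $r$ depends on $k$ but not on $x$, and that multiplication by the scalar $N'$ commutes with the integer matrix $D$, so the computation $ky=Cx$ is a genuine identity in $\Z^{d_j}$ rather than merely in $G$; this is what makes the argument go through cleanly.
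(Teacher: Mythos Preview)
Your proposal is correct and follows essentially the same approach as the paper: push a representative $[x,i]$ forward along the inductive system until the composite connecting map is divisible by $k$, then divide inside $\Z^{d_j}$. The only cosmetic differences are that the paper treats the cases $i=0$ and $i>0$ separately (going to level $k$, respectively $ki$, and using only that the single matrix $A_{k-1}$ or $A_{ki-1}$ is divisible by $k$), whereas you first shift to level $1$ and then factor out all the scalars $(m+1)$ from a block of $r\geq k$ consecutive matrices.
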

\begin{proof}
We must show that for all $g\in G$, for all positive integer $k$, there exists $h\in G$ such that $kh=g$. Let $g=[x,i]\in G$ and $k$ a positive integer.

If $k=1$, it suffices to take $h=g$.
Suppose that $k>1$. If $i=0$, let $z=A_{k-1}\cdots A_0x$. Then, $[z,k]=[x,0]$. Let $y=\frac{1}{k}A_{k-1}\cdots A_0x$. Since every entry of $A_{k-1}$ is divisible by $k$, $y\in \Z^{d_k}$. Taking $h=[y,k]$, we have that $kh=k[y,k]=[z,k]=[x,0]=g$. If $i>0$, $ki>i$. In this case, let  $z=A_{ki-1}\cdots A_ix$. Since every entry of $A_{ki-1}$ is divisible by $ki$, they are all divisible by $k$ as well. Let $y=\frac{1}{k}A_{ki-1}\cdots A_ix$. As before, $y\in \Z^{d_{ki}}$. Taking $h=[y,ki]$, we have that $kh=k[y,ki]=[z,ki]=[x,i]=g$. 
\end{proof}

\begin{lemma}\label{prop:divisible1}
Let $(A_i)_{i\in\N}$ be a sequence of $d_{i+1}\times d_i$ positive integer matrices, let $G=\varinjlim\limits_{i}(\Z^{d_i},A_i)$ and $H=\varinjlim\limits_{i}(\QQ^{d_i},A_i)$. Let $G^+$ and $H^+$ be the positive cones of $G$ and $H$ respectively, given by the induced order on the inductive limits. Let $u=[{\bf 1}, 0]_G$ and $v=[{\bf 1},0]_H$ be order units for $(G,G^+)$ and $(H,H^+)$ respectively, where $[\cdot,\cdot]_G$ and $[\cdot,\cdot]_H$ denote the equivalence classes modulo $\sim$ in $G$ and $H$ respectively, and ${\bf 1} \in \Z^{d_0}\subseteq \QQ^{d_0}$ is the vector $(1,1,\cdots, 1)$. If $G$ is divisible, then $(G,G^+,u)$ and $(H,H^+,v)$ are isomorphic as ordered groups with unit. 
\end{lemma}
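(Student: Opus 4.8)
The plan is to realize the isomorphism by the obvious comparison map between the two inductive limits and then check it is an order isomorphism fixing the units. Since the inclusions $\Z^{d_i}\hookrightarrow\QQ^{d_i}$ commute with the matrices $A_i$, they induce a group homomorphism $\iota\colon G\to H$, $\iota([x,i]_G)=[x,i]_H$, and $\iota(u)=v$ is immediate. So the whole content is to show that $\iota$ is bijective and that $\iota(G^+)=H^+$.

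I would first dispose of injectivity, which uses no hypothesis: if $\iota([x,i]_G)=0$, then $A_{k-1}\cdots A_i\,x=0$ in $\QQ^{d_k}$ for some $k\ge i$; as $x$ and the $A_j$ are integral, this equality already holds in $\Z^{d_k}$, so $[x,i]_G=0$. For surjectivity I would use that $H$, being a direct limit of $\QQ$-vector spaces, is itself a $\QQ$-vector space, in particular torsion-free: an arbitrary element of $H$ is $[q,i]_H$ with $q=\tfrac1m x$, $x\in\Z^{d_i}$, $m\in\N^\ast$, so that $m\,[q,i]_H=[x,i]_H=\iota([x,i]_G)$; picking, by divisibility of $G$, an $h\in G$ with $mh=[x,i]_G$, we get $m\,\iota(h)=\iota([x,i]_G)=m\,[q,i]_H$, hence $\iota(h)=[q,i]_H$ after cancelling $m$ in $H$.

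For the order, $\iota(G^+)\subseteq H^+$ is clear: a non-negative integer representative of a positive element of $G$ is also a non-negative rational representative. For the reverse inclusion, given $[q,i]_H\in H^+$, I would pick $k\ge i$ with $A_{k-1}\cdots A_i\,q\ge0$ in $\QQ^{d_k}$ and clear denominators, $q=\tfrac1m x$ with $x\in\Z^{d_i}$, $m\in\N^\ast$: then $A_{k-1}\cdots A_i\,x=m\,A_{k-1}\cdots A_i\,q$ is a non-negative integer vector, so $[x,i]_G\in G^+$. Using divisibility, write $[x,i]_G=mh$; the computation from the surjectivity step already gives $\iota(h)=[q,i]_H$, and to finish I must check $h\in G^+$. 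Writing $h=[y,j]_G$, the inequality $mh\ge0$ means $m\,A_{\ell-1}\cdots A_j\,y\ge0$ in $\Z^{d_\ell}$ for some $\ell\ge j$, which, as $m>0$, forces $A_{\ell-1}\cdots A_j\,y\ge0$, i.e. $h\in G^+$; this last point is just the unperforation of the dimension group $G$ in disguise. Hence $\iota(G^+)=H^+$ and $\iota$ is the required isomorphism of ordered groups with unit.

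The hypothesis ``$G$ divisible'' enters only in producing the preimage $h$ in the last two steps, everything else being formal bookkeeping with inductive limits. Accordingly, the one delicate point — and the closest thing to an obstacle — is the ``clear denominators, divide back inside the divisible group $G$, then cancel the common factor using torsion-freeness of $H$'' maneuver, together with verifying that the element obtained after dividing really lands in $G^+$.
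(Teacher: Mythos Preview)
Your proof is correct and follows essentially the same route as the paper's: define the comparison map $[x,i]_G\mapsto[x,i]_H$, observe it preserves the unit, check injectivity, and obtain surjectivity by clearing denominators and invoking divisibility of $G$. The paper handles surjectivity by a direct computation with representatives rather than appealing to torsion-freeness of $H$, but this is the same idea. For the order, the paper simply declares $\varphi(G^+)=H^+$ ``straightforward''; your explicit verification (including the unperforation-style step showing that $mh\in G^+$ with $m>0$ forces $h\in G^+$, argued at the level of representatives) is exactly what that word is hiding.
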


\begin{proof}
Let $\varphi\colon G\to H$ be the map defined by $[x,i]_G\mapsto [x,i]_H$. This map is a well defined injective group homomorphism. By definition, $\varphi([{\bf 1},0]_G)=[{\bf 1},0]_H$. Let $[y,i]_H \in H$, let $(y,i)\in \QQ^{d_i}\times \N$ be a representative. Since $y\in \QQ^{d_i}$, we can write it in the form
$$\left(\frac{w_1}{k_1},\cdots, \frac{w_{d_i}}{k_{d_i}}\right),$$
where $w_1,\cdots, w_{d_i}\in \Z$, $k_1,\cdots, k_{d_i}\in \N^{\ast}$. Let $k=lcm\{k_\ell\}$, so that $y=\frac{1}{k}(w_1',\cdots, w_{d_i}')$, some $w_1',\cdots, w_{d_i}'\in\Z$. Let $z=(w_1',\cdots, w_{d_i}')\in \Z^{d_i}$. Since $G$ is divisible, there exists $[x,j]_G\in G$ such that $k[x,j]_G=[z,i]_G$. This means that there exists $s\geq j,i$ such that $kA_s\cdots A_jx=A_s\cdots A_i z$, and since $ky=z$, we obtain that $[x,j]_H=[y,i]_H$. This proves that $\varphi$ is surjective. It is strightforward to show that $\varphi(G^+)=H^+$. 
\end{proof}

The next lemma concerning dimension groups will be useful in the sequel. We first introduce a definition.
\begin{definition}\label{def:adapted} let $(A_i)_{i\in \N}$ be a sequence of $d_{i+1}\times d_i$ positive integer matrices. We say that the sequence of $d_i\times d_i$ matrices $(J_{i})_{i\in \N}$, is {\it adapted} to $(A_i)_{i\in \N}$ if the following two conditions hold, 
\begin{enumerate}
\item \label{item:adaptedcond1} $J_i, J_i^{-1}$ have positive rational entries and for all $i\in \N$, there exists $M\geq i$ such that $A_{M} A_{M-1}\cdots A_{i} J_{i}^{-1}$ has positive integer entries.
\item \label{item:adaptedcond2} $B_i:=J_{i+1}A_iJ_i^{-1}$ has positive integer entries for all $i\in \N$. 
\end{enumerate}
\end{definition}

\begin{lemma} \label{lema:AdaptedMatrices}
 	Let $(A_i)_{i\in \N}$ be a sequence of positive integer matrices and let $(J_{i})_{i\in \N}$ be  a sequence adapted to $(A_{i})_{i\in \N}$. Let $G=\varinjlim\limits_{i}(\Z^{d_i},A_i)$ and $H=\varinjlim\limits_{i}(\Z^{d_i},B_i)$, where $(B_i)_{i\in\N}$ is the sequence of matrices defined in Definition \ref{def:adapted}-(\ref{item:adaptedcond2}). Consider the positive cones $G^+$ and $H^+$ given by the induced order, and the order units $u=[A_{M_0} A_{M_0-1}\cdots A_{0} J_{0}^{-1}{\bf 1},M_0+1]_G$ and $v=[{\bf 1},0]_H$, where $[\cdot,\cdot]_G$ and $[\cdot,\cdot]_H$ denote the equivalence classes modulo $\sim$ in $G$ and $H$ respectively, ${\bf 1} \in \Z^{d_0}$ is the vector $(1,1,\cdots, 1)$, and $M_0\geq 0$ is such that $A_{M_0} A_{M_0-1}\cdots A_{0} J_{0}^{-1}$ has positive integer entries. Then, $(G,G^+,u)$ and $(H,H^+,v)$ are isomorphic as ordered groups with unit. 
\end{lemma}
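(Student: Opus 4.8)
The plan is to build the sought isomorphism out of the rational conjugacy furnished by the matrices $J_i$, and then to invoke the two conditions in the definition of \emph{adapted} in order to descend it to the integral inductive limits and to identify the images of the order units. First I would pass to the rational limits $G_\QQ:=\varinjlim_i(\QQ^{d_i},A_i)$ and $H_\QQ:=\varinjlim_i(\QQ^{d_i},B_i)$, each equipped with the positive cone induced from the coordinatewise order on the $\QQ^{d_i}$. Condition \eqref{item:adaptedcond2} rewrites as the commutation $J_{i+1}A_i=B_iJ_i$, so the maps $J_i\colon\QQ^{d_i}\to\QQ^{d_i}$ form a morphism of inductive systems and induce a group isomorphism $\Phi\colon G_\QQ\to H_\QQ$, $[x,i]\mapsto[J_ix,i]$, with inverse $[y,i]\mapsto[J_i^{-1}y,i]$. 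Since the entries of the $J_i$, the $J_i^{-1}$, the $A_i$ and the $B_i$ are all positive, both $\Phi$ and $\Phi^{-1}$ carry positive cones into positive cones, so $\Phi$ is an isomorphism of ordered groups. The canonical maps $\Z^{d_i}\hookrightarrow\QQ^{d_i}$ induce injective positive homomorphisms $G\hookrightarrow G_\QQ$ and $H\hookrightarrow H_\QQ$ (a class vanishing in a rational limit is already annihilated over $\Z$ by a product of the integer matrices), and under these inclusions $G^+=G\cap G_\QQ^+$ and $H^+=H\cap H_\QQ^+$; I view $G\subseteq G_\QQ$ and $H\subseteq H_\QQ$ from now on.

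Next I would match the order units. Iterating the identity $J_{k+1}A_k=B_kJ_k$ gives $J_{M_0+1}A_{M_0}A_{M_0-1}\cdots A_0=B_{M_0}B_{M_0-1}\cdots B_0\,J_0$, whence
\[
\Phi(u)=\bigl[\,J_{M_0+1}A_{M_0}\cdots A_0J_0^{-1}\mathbf 1,\ M_0+1\,\bigr]_{H_\QQ}=\bigl[\,B_{M_0}\cdots B_0\mathbf 1,\ M_0+1\,\bigr]_{H_\QQ}=[\mathbf 1,0]_{H_\QQ}=v .
\]
So $\Phi$ carries $u$ to $v$.

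The remaining task — and the main obstacle — is to prove that $\Phi$ restricts to a bijection of $G$ onto $H$. Granting this, $\Phi|_G$ is a group isomorphism $G\to H$ with $\Phi|_G(u)=v$ and $\Phi(G^+)=\Phi(G)\cap H_\QQ^+=H\cap H_\QQ^+=H^+$, which is precisely the desired isomorphism of ordered groups with unit. One inclusion is immediate from condition \eqref{item:adaptedcond1}: given $y\in\Z^{d_i}$, pick $M\geq i$ with $C_i:=A_MA_{M-1}\cdots A_iJ_i^{-1}$ an integer matrix; then $\Phi^{-1}([y,i]_H)=[J_i^{-1}y,i]_{G_\QQ}=[C_iy,\,M+1]_{G_\QQ}$, which has an integral representative, hence lies in $G$. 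Thus $H\subseteq\Phi(G)$. The reverse inclusion $\Phi(G)\subseteq H$ is the crux: for $x\in\Z^{d_i}$ one must produce, for some $N\geq i$, an integral representative of $\Phi([x,i]_G)=[\,J_{N+1}A_NA_{N-1}\cdots A_ix,\ N+1\,]_{H_\QQ}$, that is, an $N$ with $J_{N+1}A_N\cdots A_i\,x\in\Z^{d_{N+1}}$. I expect this to be the delicate step: it amounts to clearing denominators uniformly along the $B$-tower, and I would attack it by playing conditions \eqref{item:adaptedcond1} and \eqref{item:adaptedcond2} against each other and against the positivity of all the matrices involved. Once $\Phi(G)=H$ is secured, combining it with the two preceding paragraphs completes the proof.
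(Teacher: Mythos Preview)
Your approach is essentially the paper's: the paper phrases the same rational conjugacy via an intermediate limit $K=\varinjlim_i(J_i^{-1}\Z^{d_i},A_i)$ inside your $G_\QQ$, observes $H\cong K$ by $[x,i]_H\mapsto[J_i^{-1}x,i]_K$ (this is your $\Phi^{-1}$ restricted to $H$), and then asserts, citing only condition~\eqref{item:adaptedcond1}, that the natural map $\theta\colon K\to G$ is an order isomorphism. So the paper's argument leaves unaddressed exactly the step you flag as the crux --- surjectivity of $\theta$, equivalently $\Phi(G)\subseteq H$.

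The gap you spotted is genuine; in fact the lemma is false under the hypotheses as literally written. Take $d_i=1$, $A_i=2$ for all $i$, and $J_i=2^{-i}$. Then $J_i,J_i^{-1}$ are positive rationals, $A_M\cdots A_iJ_i^{-1}=2^{M+1}$ is a positive integer for every $M\geq i$ (so condition~\eqref{item:adaptedcond1} holds), and $B_i=J_{i+1}A_iJ_i^{-1}=1$ is a positive integer (so condition~\eqref{item:adaptedcond2} holds); yet $G\cong\Z[1/2]$ while $H\cong\Z$. What rescues the situation is that in the paper's only use of the lemma (the proof of \cref{theo:main1}) the matrices $J_i=S_iD_i$ actually have \emph{integer} entries. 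Under that extra hypothesis your missing inclusion is immediate: for $x\in\Z^{d_i}$ one has $J_ix\in\Z^{d_i}$, hence $\Phi([x,i]_G)=[J_ix,i]_{H_\QQ}\in H$. Thus the correct fix --- for your write-up and for the paper's --- is to add to ``adapted'' the requirement that each $J_i$ be an integer matrix (or, symmetrically with condition~\eqref{item:adaptedcond1}, that for each $i$ some $J_{N+1}A_N\cdots A_i$ be integral); with that amendment your outline becomes a complete proof, and no ``delicate'' denominator-clearing is needed.
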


\begin{proof} 
Note that condition (\ref{item:adaptedcond2}) allows us to define morphisms $A_i\colon J_{i}^{-1}\Z^{d_i}\to J_{i+1}^{-1}\Z^{d_{i+1}}$. Note also that $H$ is order isomorphic to the inductive limit $K=\varinjlim\limits_{i}(J_i^{-1}\Z^{d_i},A_i)$, via the map $\varphi\colon H\to K$, $[x,i]_H\mapsto [J_i^{-1}x,i]_K$. This order isomorphism maps the unit $v$ into $[J_0^{-1}{\bf 1},0]_K$. On the other hand, condition (\ref{item:adaptedcond1}) ensures that $K$ is order isomorphic to $G$ via the map $\theta\colon K\to G$, $[y,i]_K\mapsto [A_{M} A_{M-1}\cdots A_{i} J_{i}^{-1}y, M+1]_G$, where $M\geq i$ is such that $A_{M} A_{M-1}\cdots A_{i} J_{i}^{-1}$ has positive integer entries. We conclude by noticing that $\theta\circ\varphi(H^+)=G^+$ and $\theta\circ\varphi(v)=u$. 
\end{proof}

Given a dimension group with unit $(G,G^+,u)$, a {\it trace} of $(G,G^+,u)$ is a group homomorphism $p\colon G\to \mathbb{R}$ such that $p$ is non-negative ($p(G^+)\geq 0$) and $p(u)=1$. The collection of all traces of $(G,G^+,1)$ is denoted by $S(G,G^+,u)$. An element $g \in G$ is said to be {\it infinitesimal} if $p(g) = 0$ for every trace $p \in S(G, G^+,u)$. The collection of all infinitesimals of $(G,G^+,u)$ form a subgroup of $G$, called the infinitesimal subgroup of $(G,G, u)$ and denoted $\Inf(G,G^+,u)$.

The following result will be used in the proof of Theorem \ref{theo:main3}. It connects simple dimension groups and Choquet simplices and its proof can be found in \cite{effros}. We use the formulation of \cite{malen-hoynes}.
\begin{theorem}\cite[Theorem 3.22]{malen-hoynes}\label{theo:simplexanddg}
Let $K$ be a Choquet simplex and $\Aff(K)$ the additive group of real, affine and continuous functions on $K$. Let $H$ be a countable dense subgroup of $\Aff(K)$, and suppose there exists a torsion-free abelian group $G$ and a homomorphism $\theta:G\to H$. Then, letting
$$G^+=\{g\in G \mid \theta(g)(p)>0 \forall p\in K\}\cup \{0\},$$
we get that $(G,G^+)$ is a simple dimension group such that $\Inf(G)=\ker(\theta)$. In particular, if $G=H$ (with $\theta$ the identity map) and $G$ contains the constant function $1$, then $\Inf(G)=\{0\}$ and $S(G,G^+,1)$ is affine homeomorphic to $K$ by the map sending $k\in K$ to $\hat{k}:G\to \R$, where $\hat{k}(g)=g(k)$ for all $g\in G$.
\end{theorem}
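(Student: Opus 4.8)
The plan is to verify the Effros--Handelman--Shen criterion, namely that a countable partially ordered abelian group is a dimension group exactly when it is unperforated and has the Riesz interpolation property, and then to treat simplicity and the statements about traces separately. So the first task is to check that $(G,G^{+})$ is an ordered group at all. The relations $G^{+}+G^{+}\subseteq G^{+}$ and $G^{+}\cap(-G^{+})=\{0\}$ are immediate from the definition of $G^{+}$, since no nonzero $g$ can have $\theta(g)$ both strictly positive and strictly negative on all of $K$. For $G^{+}-G^{+}=G$ I would use that $\theta$ maps onto $H$, which is dense in $\Aff(K)$: given $g\in G$, choose $b\in G$ with $\theta(b)$ everywhere larger than $\sup_{K}|\theta(g)|$ (possible since $\Aff(K)$ contains arbitrarily large constants and $\theta(G)=H$ is dense); then $b$ and $g+b$ lie in $G^{+}$, and $g=(g+b)-b$.

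Next, unperforation is the point where torsion-freeness is used: if $ng\in G^{+}$ for some $n\ge 1$, then either $ng=0$, forcing $g=0$, or $n\,\theta(g)(p)>0$ for every $p\in K$, hence $\theta(g)(p)>0$ for every $p$, so $g\in G^{+}$. Simplicity I would obtain by showing that every nonzero $g\in G^{+}$ is an order unit: $\theta(g)$ is strictly positive and continuous on the compact set $K$, hence bounded below by some $\varepsilon>0$, so for any $a\in G$ the boundedness of $\theta(a)$ yields $n\theta(g)-\theta(a)>0$ on $K$ for all large $n$, that is, $a\le ng$.

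I expect the Riesz interpolation property to be the main obstacle, and it is exactly there that the hypothesis that $K$ is a Choquet simplex is used. Given $a_{1},a_{2}\le b_{1},b_{2}$ in $G$, the functions satisfy $\theta(a_{i})\le\theta(b_{j})$ pointwise on $K$, so $f:=\max(\theta(a_{1}),\theta(a_{2}))$ is continuous convex, $h:=\min(\theta(b_{1}),\theta(b_{2}))$ is continuous concave, and $f\le h$; by Edwards' separation theorem for Choquet simplices there is a continuous affine $\psi$ on $K$ with $f\le\psi\le h$. It then remains to produce $c\in G$ with $a_{i}\le c\le b_{j}$. When all four relations $a_{i}\le b_{j}$ are strict on $K$, compactness gives a uniform gap $h-f\ge\delta>0$, so one can arrange $\psi$ to lie strictly between $f$ and $h$, approximate $\psi$ within $\Aff(K)$ by an element of $H$ that still obeys the strict inequalities, and pull it back through $\theta$ to get $c$; the degenerate cases, where some $a_{i}$ equals some $b_{j}$, are handled directly, the common value being an interpolant. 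The delicate part throughout is keeping track of strict versus non-strict positivity in the definition of $G^{+}$.

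For the remaining assertions I would identify $S(G,G^{+},u)$ with $K$. Fix an order unit $u$; then $\theta(u)>0$ on $K$, and for $g\in\ker\theta$ one has $-u\le ng\le u$ in $G$ for every $n\in\Z$, so $p(g)=0$ for every trace $p$, giving $\ker\theta\subseteq\Inf(G)$; conversely, if $\theta(g)(q)\ne0$ for some $q\in K$, then $a\mapsto\theta(a)(q)/\theta(u)(q)$ is a trace not vanishing at $g$, so $\Inf(G)\subseteq\ker\theta$. When $G=H$, $\theta=\id$, and $1\in G$, this gives $\Inf(G)=\{0\}$, and the map $k\mapsto\hat k$ is manifestly affine and weak$^{\star}$-continuous, and injective because $H$ is dense in $\Aff(K)$ and continuous affine functions separate points of a compact convex set. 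For surjectivity, comparing an element $f\in G=H$ with integer multiples of the unit $1$ forces $|p(f)|\le\|f\|_{\infty}$ for every trace $p$; hence $p$ extends, by density and continuity, to a positive unital $\R$-linear functional on $\Aff(K)$, which by the standard identification of the state space of $\Aff(K)$ with $K$ is evaluation at some $k\in K$, so $p=\hat k$.
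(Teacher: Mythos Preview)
The paper does not supply a proof of this statement; it is quoted from \cite{effros} in the formulation of \cite{malen-hoynes}, so there is no argument in the paper to compare yours against.

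Your outline is the standard route---Effros--Handelman--Shen characterization combined with Edwards' separation theorem for Choquet simplices---and is essentially correct. Two small points are worth flagging. First, you tacitly assume that $\theta$ is onto $H$; this is needed both for $G^{+}-G^{+}=G$ and for pulling the affine interpolant back to an element of $G$, and it is indeed the intended hypothesis even though the paper's phrasing leaves it implicit (without it the statement is false: take $\theta=0$). Second, the Effros--Handelman--Shen theorem, as usually stated and as matches the inductive-limit definition used in the paper, requires $G$ to be countable; you do not check this, but again it is part of the intended setup (and holds in the only application the paper makes, where $G=H$). Your handling of the degenerate interpolation cases is brief but correct: if some $a_i=b_j$ then that common value serves as $c$, and otherwise all four differences $b_j-a_i$ lie in $G^{+}\setminus\{0\}$, hence $\theta(b_j)-\theta(a_i)$ is strictly positive on $K$, giving the uniform gap you need.
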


Let $(X,T)$ be a Cantor minimal system. The {\it dynamical dimension group} of $(X,T)$ or simply the {\it dimension group} of $(X,T)$ is the following triple,
$$K^0(X,T)=(H(X,T), H^+(X,T), [1]),$$
where $H(X,T)=C(X,\Z)/\beta C(X,\Z)$, $\beta$ is the coboundary map, $[\cdot]$ denotes the class modulo $\beta C(X,\Z)$ of an element in $H(X,T)$, $H^+(X,T)$ is the set of classes of non-negative functions and $1$ is the constant function equal to $1$.
\begin{theorem}\cite[Corollary 6.3]{HPS}\label{theo:isdg}
If $(X,T)$ is a Cantor minimal system, the triple $K^0(X,T)$ is a simple dimension group with unit. Furthermore, if $(G,G^+,u)$ is a simple dimension group with unit, then there exists a Cantor minimal system $(X,T)$ such that $K^0(X,T)$ is isomorphic to $(G,G^+,u)$ as ordered groups with unit.   
\end{theorem}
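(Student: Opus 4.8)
The plan is to establish both halves of \cref{theo:isdg} through the Bratteli--Vershik (adic) representation of Cantor minimal systems.

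For the first assertion, the starting point is to fix a refining sequence of Kakutani--Rokhlin partitions of $(X,T)$: finite clopen partitions
\[ \cP_n = \{\, T^j B_{n,k} : 1\le k\le d_n,\ 0\le j < h_{n,k}\,\},\qquad n\in\N, \]
with $\cP_{n+1}$ refining $\cP_n$, with $\bigcup_k B_{n+1,k}\subseteq\bigcup_k B_{n,k}$, with $\bigcap_n B_{n,1}$ a singleton, and with $\bigcup_n\cP_n$ generating the topology of $X$; the existence of such a sequence is the standard consequence of minimality, via bounded first return times to a shrinking clopen base. Since $X$ is a Cantor space, every $f\in C(X,\Z)$ is locally constant, hence $\cP_n$-measurable for large $n$, so $C(X,\Z)=\bigcup_n C_n$, where $C_n$ is the group of $\cP_n$-measurable integer functions. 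I would then use the telescoping coboundary identity $\mathbf{1}_{B_{n,k}}-\mathbf{1}_{T^jB_{n,k}}=\beta\big(\sum_{i=1}^{j}\mathbf{1}_{T^iB_{n,k}}\big)$, valid for $0\le j<h_{n,k}$, to see that $[\mathbf{1}_{T^jB_{n,k}}]=[\mathbf{1}_{B_{n,k}}]$ in $H(X,T)$. This makes the maps $\iota_n\colon\Z^{d_n}\to H(X,T)$, $e_k\mapsto[\mathbf{1}_{B_{n,k}}]$, compatible in the sense that $\iota_{n+1}\circ M_n=\iota_n$, where $M_n$ is the non-negative integer matrix whose $(l,k)$-entry counts the atoms of the $l$-th $\cP_{n+1}$-tower contained in $B_{n,k}$, and it also shows that every class $[f]$ with $f$ being $\cP_n$-measurable lies in the image of $\iota_n$; hence $H(X,T)=\varinjlim(\Z^{d_n},M_n)$. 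Comparing coordinate vectors at a sufficiently deep level identifies $H^+(X,T)$ with the induced order, and $[1]=[\mathbf{1}_X]=\sum_k h_{n,k}[\mathbf{1}_{B_{n,k}}]$ is an order unit (trivially, since continuous integer functions are bounded). So $K^0(X,T)$ is a dimension group with unit; and telescoping $(\cP_n)$ further so that each atom of each $\cP_{n+1}$-tower meets every $B_{n,k}$ makes the matrices $M_n$ strictly positive, which is exactly simplicity.

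For the second assertion, I would start from a presentation $(G,G^+)=\varinjlim(\Z^{d_n},A_n)_{n\ge 1}$ with every $A_n$ a strictly positive integer matrix (available by simplicity, after telescoping and re-indexing). Since $u$ is an order unit in a simple dimension group, a routine telescoping argument --- pushing finitely many inequalities of the form $[e_j]\le k_j u$ to a common deeper level where the relevant matrix products are strictly positive --- lets one assume $u=[x_1,1]$ with $x_1\in\Z^{d_1}$ strictly positive. Now I would build the Bratteli diagram $B$ with a single vertex at level $0$, with $d_n$ vertices at level $n\ge1$, with incidence matrix $x_1$ (as a $d_1\times1$ column) from level $0$ to level $1$, and with incidence matrix $A_n$ from level $n$ to level $n+1$; this diagram is simple. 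Equipping $B$ with a proper order --- a linear order on the edges entering each vertex admitting a unique maximal and a unique minimal infinite path, which always exists for a simple diagram --- makes the Bratteli--Vershik map $V_B$ on the path space $X_B$ a homeomorphism, and $(X_B,V_B)$ a Cantor minimal system. Applying the computation of the first assertion to the canonical Kakutani--Rokhlin partitions of $(X_B,V_B)$ yields $K^0(X_B,V_B)=\varinjlim(\Z^{d_n},A_n)_{n\ge1}=(G,G^+)$ with the induced order, and with unit $[1]$ equal to the image of the level-$1$ height vector, which is precisely $x_1$, so $[1]=[x_1,1]=u$. Thus $(X,T):=(X_B,V_B)$ realizes $(G,G^+,u)$.

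The part I expect to be the genuine obstacle is the Bratteli--Vershik machinery itself, not the algebra: namely (i) producing from minimality a refining, topology-generating sequence of Kakutani--Rokhlin partitions and checking that $(X,T)$ is conjugate to the resulting adic system, and (ii) showing that a simple Bratteli diagram admits a proper order making $V_B$ a well-defined minimal homeomorphism. The identification of $C(X,\Z)/\beta C(X,\Z)$, together with its order and its unit, as the stated inductive limit is then a direct computation built on the telescoping identity above. The only mildly delicate point in the realization --- getting the distinguished unit to equal $u$ on the nose rather than merely up to isomorphism --- reduces to choosing the first incidence matrix of $B$ to be a strictly positive column vector representing $u$, together with a telescoping and change-of-basis normalization of exactly the kind used in \cref{lema:AdaptedMatrices}.
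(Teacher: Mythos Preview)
The paper does not prove \cref{theo:isdg}; it is quoted verbatim as \cite[Corollary~6.3]{HPS} and used as a black box. There is therefore no ``paper's own proof'' to compare your attempt against.

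That said, your outline is the standard Herman--Putnam--Skau argument and is essentially correct. Two places deserve a bit more care if you want a complete proof rather than a sketch. First, in the identification $H(X,T)\cong\varinjlim(\Z^{d_n},M_n)$ you establish surjectivity but not injectivity: you need to show that if $\iota_n(x)=0$ in $H(X,T)$ then $M_{m-1}\cdots M_n x=0$ for some $m\ge n$, i.e.\ that a $\cP_n$-measurable coboundary eventually has zero tower-count vector. This is where the real work lies (one route is via a Gottschalk--Hedlund type argument characterizing integer coboundaries), and your phrase ``comparing coordinate vectors at a sufficiently deep level'' hides it. Second, in the realization direction, the claim that every simple Bratteli diagram admits a proper order is true but not automatic; it is itself a lemma in \cite{HPS}. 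You also silently assume the path space $X_B$ is infinite, which fails precisely when $(G,G^+,u)\cong(\Z,\Z^+,1)$; the statement as written should be read with that trivial case excluded.
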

Given an invariant measure $\mu \in \cM(X,T)$, we define the trace $\tau_{\mu}$ on $K^0 (X,T)$  by  $\tau_{\mu} ([f]):= \int f d\mu$. The correspondence $\mu\mapsto \tau_\mu$ is an affine isomorphism from $\cM(X,T)$ to $S(K^0(X,T))$, so that traces of the dynamical dimension group $K^0(X,T)$ correspond to the Choquet simplex of invariant measures of the system $(X,T)$ (see for instance \cite[Theorem 5.5]{HPS}).

\subsection{Orbit equivalence.}\label{subsec:oe}
Two minimal dynamical systems $(X_1,T_1)$ and $(X_2,T_2)$ are {\it orbit equivalent} if there exists a homeomorphism $\phi\colon X_1 \to X_2$ sending orbits of the $T_1$-action onto orbits of the $T_2$-action, {\it i.e.},
$$\phi( \{ T_1^k (x)  : k \in \Z \})= \{ T_2^k \phi (x) : k \in \Z\} \text{ for all } x \in X_1.$$
Orbit equivalence implies the existence of two maps $n_1\colon X_1  \to \Z$ and $n_2\colon X_2 \to \Z$ (uniquely defined by minimality) such that, for all $x \in X_1$,
$$\phi \circ T_1 (x)= T_2^{n_1(x)} \circ \phi(x) \mbox { and }  \phi \circ T_1^{n_2(x)} (x)= T_2 \circ \phi (x).$$
The dynamical systems $(X_1,T_1)$ and $(X_2,T_2)$ {\it strong orbit equivalent}  if $n_1$ and $n_2$ both have at most one point  of discontinuity. Such notion is natural since it was shown  in \cite{Boyle} that if $n_1$ (or $n_2$) is continuous, then the two systems are {\it flip conjugate}, that is, $(X_1,T_1)$ is either conjugate to $(X_2,T_2)$ or to its inverse $(X_2,T_2^{-1})$.

The following result is one of the most important theorems connecting dimension groups and orbit equivalence.
\begin{theorem}\cite[Theorem 2.1]{GPS}\label{theo:caracterizationoe}
Let $(X_1,T_1)$ and $(X_2,T_2)$ two Cantor minimal systems. Then $(X_1,T_1)$ and $(X_2,T_2)$ are strong orbit equivalent if and only if $K^0(X_1,T_1)$ and $K^0(X_2,T_2)$ are isomorphic as ordered groups with unit.
\end{theorem}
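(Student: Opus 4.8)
This is the Giordano--Putnam--Skau theorem \cite{GPS}, and the plan is to establish the two implications using Bratteli--Vershik models. For the implication ``$K^0(X_1,T_1)$ and $K^0(X_2,T_2)$ isomorphic with unit $\Rightarrow$ strong orbit equivalence'', I would proceed in three steps. \emph{(i)} By the Bratteli--Vershik representation theorem \cite{HPS}, realize each $(X_i,T_i)$, up to conjugacy, as a Bratteli--Vershik system $(X_{B_i},\lambda_i)$ on a properly ordered simple Bratteli diagram $B_i$, in such a way that $K^0(X_i,T_i)$ is order-isomorphic, with unit, to the dimension group of $B_i$ (the inductive limit of the incidence matrices, with the class of the vertex at level $0$ as unit). \emph{(ii)} Invoke Bratteli's intertwining criterion for isomorphism of inductive limits: an isomorphism of dimension groups with unit can, after telescoping both diagrams, be realised by a commuting zig--zag of positive integer matrices between the two sequences of incidence matrices; concretely this means that, after telescoping and forgetting the orders, $B_1$ and $B_2$ become one and the same unordered simple Bratteli diagram $B$, now carrying two proper orders. \emph{(iii)} Show that two Bratteli--Vershik systems built on a single simple Bratteli diagram from two proper orders are strong orbit equivalent: each Vershik map induces on the path space $X_B$ exactly the tail (cofinal) equivalence relation, the only difference between the two being how the orbit of the maximal path is attached to the orbit of the minimal path; hence a suitable modification of the identity of $X_B$ along these exceptional countable orbits is an orbit equivalence whose cocycles are locally constant off a single point, i.e.\ a strong orbit equivalence. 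Composing the conjugacies of step \emph{(i)} with this equivalence proves the implication.

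For the converse, I would work directly with $H(X,T)=C(X,\Z)/\beta C(X,\Z)$. Let $\phi\colon X_1\to X_2$ be the orbit equivalence and $n_1,n_2$ its cocycles, each with at most one point of discontinuity, and let $\beta_1,\beta_2$ be the coboundary maps of $T_1,T_2$. The map $\Phi\colon C(X_2,\Z)\to C(X_1,\Z)$, $\Phi f=f\circ\phi$, is a group isomorphism with $\Phi(\mathbf 1)=\mathbf 1$ that preserves non-negativity; so it suffices to check $\Phi(\beta_2 C(X_2,\Z))=\beta_1 C(X_1,\Z)$, after which $\Phi$ descends to an isomorphism $K^0(X_2,T_2)\to K^0(X_1,T_1)$ of ordered groups with unit. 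Using $\phi\circ T_1^{n_2(x)}(x)=T_2\circ\phi(x)$ one gets, with $F:=\Phi g$,
\[
\Phi(\beta_2 g)(x)=F\bigl(T_1^{\,n_2(x)}x\bigr)-F(x),
\]
which is a signed Birkhoff-type sum of $\beta_1 F$ over an orbit interval of length $|n_2(x)|$. I would then apply the Gottschalk--Hedlund-type criterion for minimal systems --- $h\in C(X,\Z)$ lies in $\beta C(X,\Z)$ if and only if the Birkhoff sums $\sum_{k=0}^{m-1}h\circ T^k$ are uniformly bounded in $m$ and $x$ --- together with the fact that $n_2$ is bounded and locally constant off one point, to conclude that the Birkhoff sums of $\Phi(\beta_2 g)$ remain bounded, hence $\Phi(\beta_2 g)\in\beta_1 C(X_1,\Z)$. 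The reverse inclusion is symmetric, using the cocycle $n_1$ together with $\phi^{-1}$.

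The main obstacle is step \emph{(iii)} of the first implication: turning the purely algebraic intertwining of dimension groups into an honest topological orbit equivalence and, above all, controlling the cocycle --- verifying that changing the proper order on a fixed simple Bratteli diagram perturbs the Vershik map only along the unique exceptional orbit, so that the resulting orbit equivalence is \emph{strong} rather than merely an orbit equivalence. In the converse direction the analogous delicate point is the bookkeeping showing that ``$n_2$ has at most one discontinuity'' is precisely what keeps the telescoped Birkhoff sums bounded. Everything else is the dictionary between Cantor minimal systems, their Bratteli--Vershik models and their dimension groups recalled in \cref{sec:basicdef,sec:brattelidef}, plus Bratteli's intertwining lemma for inductive limits.
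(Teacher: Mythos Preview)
The paper does not prove this theorem at all: it is quoted from \cite{GPS} as background and used as a black box (for instance in the proof of \cref{theo:main2}). So there is no ``paper's own proof'' to compare against; what follows is an assessment of your sketch on its own merits.

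Your forward implication is the standard GPS argument and is essentially correct in outline: realise both systems by properly ordered simple Bratteli diagrams (\cite{HPS}), use Elliott/Bratteli intertwining to reduce to a single unordered simple diagram with two proper orders, and then argue that two Vershik maps on the same diagram share the tail (cofinal) relation up to the single exceptional max/min orbit, which yields strong orbit equivalence. You correctly flag step \emph{(iii)} as the delicate point.

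The converse, however, contains a genuine gap. You assert that ``$n_2$ is bounded and locally constant off one point'' and then feed this into a Gottschalk--Hedlund bound on Birkhoff sums. The local constancy is fine, but boundedness is \emph{false} in general: an integer-valued function on a Cantor set that is continuous off a single point need not be bounded (e.g.\ on $\{0,1\}^{\N}$ take $x\mapsto\min\{n:x_n=1\}$, continuous off $0^{\infty}$ but unbounded). Strong orbit equivalence only gives one point of discontinuity for $n_1,n_2$, not boundedness. Consequently your Birkhoff-sum estimate does not go through as written: writing $\phi(T_1^k x)=T_2^{\sigma_x(k)}\phi(x)$, the set $\{\sigma_x(0),\dots,\sigma_x(m-1)\}$ is a set of $m$ distinct integers but need not be close to an interval, so $\sum_{k=0}^{m-1}\Phi(\beta_2 g)(T_1^k x)=\sum_{k=0}^{m-1}(\beta_2 g)\bigl(T_2^{\sigma_x(k)}\phi(x)\bigr)$ does not telescope and you get no uniform bound from $\|g\|_\infty$ alone. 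The actual GPS argument for this direction is different: one shows that a strong orbit equivalence transports Kakutani--Rokhlin towers to Kakutani--Rokhlin towers (using that the single discontinuity can be placed at the base point of the tower), thereby producing Bratteli--Vershik models for $(X_1,T_1)$ and $(X_2,T_2)$ on one and the same unordered diagram, whence $K^0(X_1,T_1)\cong K_0(V,E)\cong K^0(X_2,T_2)$. If you want a purely cohomological proof, you must replace the incorrect boundedness claim by this tower argument or by the groupoid/AF-relation formulation.
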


\section{Bratteli diagrams and $\cS$-adic subshifts}\label{sec:brattelidef}

\subsection{Bratteli diagrams and Bratteli-Vershik systems.}\label{subsec:bratteli}
A {\it Bratteli diagram} is an infinite directed graph $(V,E)$ where the set of vertices $V$ and the set of edges $E$ can be written as a countable disjoint union of non-empty finite sets,
$$V=V_0\cup V_1\cup V_2\cup\cdots \quad \mbox{ and } \quad E=E_1\cup E_2\cup\cdots,$$
with the property that $V_0$ is a single point and there exist a {\it range map} $r\colon E\to V$ and a {\it source map} $s\colon E\to V$ so that $r(E_i)\subseteq V_i$ and $s(E_i)\subseteq V_{i-1}$. Also, we assume that $s^{-1}(v)\neq \emptyset$ for all $v\in V$ and $r^{-1}(v)\neq \emptyset$ for all $v\in V\setminus V_0$.

For $i\in\N$, let $A_i$ denote the $i$th {\it incidence matrix} of $(V,E)$, that is, the $|V_{i+1}|\times|V_i|$ matrix whose coefficient $A_i(k,j)$ is the number of edges connecting $u_j\in V_i$ with $v_k\in V_{i+1}$.

Let $(\ell_i)_{i\in\N}$ be a subsequence of $\N$. Define a new Bratteli diagram $(V',E')$ by letting $V_i'=V_{\ell_i}$ and $A_i'=A_{\ell_{i+1}-1}A_{\ell_{i+1}-2}\cdots A_{\ell_i}$. The sets of edges $E_i'$ and the range and source maps are obtained from the new incidence matrices. The diagram $(V',E')$ is called a {\it telescoping} of $(V,E)$ to $(\ell_i)_{i\in\N}$. The sequence $(\ell_i)_{i\in\N}$ is called a {\it sequence of telescoping depths}. If a Bratteli diagram $(V',E')$ can be telescoped to obtain $(V,E)$, then we say that $(V',E')$ is a {\it microscoping} of $(V,E)$. A Bratteli diagram is {\it simple} if it can be telescoped so that all its incidence matrices have only positive entries. 

The following technical lemma will be used to the proof of Theorem \ref{theo:main1}.

\begin{lemma}\label{lem:h's}
Let $(V,E)$ be a simple Bratteli diagram and let $p_n$
be a sequence of positive real numbers such that $\lim n/p_n = 0$. Then, there exists a telescoping $(V',E')$ of $(V,E)$ and a sequence of positive integers $(h_i)_{i\in\N}$ satisfying the following conditions:
\begin{enumerate}
    \item \label{item:h'slemmacond1} for all $i\in\N$, $h_i+h_i^2<\min_{k,j}\{(A_i')_{k,j}\}$;
    \item \label{item:h'slemmacond2} for all $i\geq 1$,
    $$\frac{|V_i'|^3 t}{p_{t}} <\frac{1}{i},$$
    for all $t\geq h_i$;
    \item \label{item:h'slemmacond3} for all $i\geq 1$, $h_i>2|V_{i-1}'|$.
\end{enumerate}
\end{lemma}
\begin{proof}
We will define a sequence of telescoping depths $(\ell_i)_{i\in\N}$ and the sequence $(h_i)_{i\in\N}$ inductively. The aforementioned diagram $(V',E')$ will be the telescoping of $(V,E)$ to $(\ell_i)_{i\in\N}$.

For $i=0$, set $\ell_0=0$ and $h_0=1$. Let $\ell_1$ be a positive integer such that $h_0+h_0^2<\min_{k,j}\{(A_{\ell_1-1}A_{\ell_1-2}\cdots A_0)_{k,j}\}$.  We can always find such a number thanks to the simplicity of $(V,E)$. Assume that for $i\geq 1$ we have defined $\ell_0,\ell_1,\cdots, \ell_i$ and $h_0,h_1,\ldots,h_{i-1}$. Since $\lim n/p_n = 0$, we may take a positive integer $h_i$ such that for all $t\geq h_i$, 
$$\frac{t}{p_t}<\frac{1}{i|V_{\ell_i}|^3}.$$
Obviously we can also assume that $h_i>2|V_{\ell_{i-1}}|$. We set $\ell_{i+1}$ to be a positive integer such that $\ell_{i+1} >\ell_i$ and  
$$h_i+h_i^2<\min_{k,j}\{(A_{\ell_{i+1}-1}A_{\ell_{i+1}-2}\cdots A_{\ell_i})_{k,j}\}.$$
We have then defined $\ell_{i+1}$ and $h_{i}$.  By construction, the telescoping of $(V,E)$ to the levels $(\ell_i)_{i\in\N}$ satisfies conditions (\ref{item:h'slemmacond1})--(\ref{item:h'slemmacond3}).
\end{proof}

An {\it ordered Bratteli diagram} is a Bratteli diagram together with a linear ordering on $r^{-1}(v)$ for each $v\in V\setminus V_0$. This defines a partial order $\geq$ on $E$. An ordered Bratteli diagram $(V,E)$ together with a partial order $\geq$ on $E$ is denoted $(V,E,\geq)$. Let $E_{min}$ and $E_{max}$ denote the sets of minimal and maximal edges, respectively. An {\it infinite path} in $(V,E)$, that is, a sequence of the form $(e_1,e_2,\cdots)$ where $e_i\in E_i$ and  $r(e_i)=s(e_{i+1})$ for all $i\in\N$, is said to be {\it minimal} (resp. {\it maximal}) if all its edges belong to $E_{min}$ (resp. $E_{max}$). An ordered Bratteli diagram is {\it properly ordered} if it is simple and it has a unique minimal path and a unique maximal path, which we denote by $x_{min}$ and $x_{max}$, respectively. An ordered Bratteli diagram is {\it left/right ordered} if whenever two edges $e_1$, $e_2$ with the same range verify $e_1\geq e_2$ and $s(e_1)=v_i$, $s(e_2)=v_j$, we have $i\geq j$.

Given a properly ordered Bratteli diagram $(V,E,\geq)$, it is possible to define a dynamic on it as follows: let $X_B$ denote the space of infinite paths on $E$ (where we assume that $X_B$ is infinite). We endow $X_B$ with a topology by giving a basis of open sets, namely the family of {\it cylinder sets},
$$[e_1 , e_2 , \cdots, e_k ]_B = \{(f_1 , f_2 , \cdots) \in X_B: f_i=e_i \mbox{ for all } 1\leq i\leq k\}.$$
The cylinder sets are also closed. The space $X_B$ endowed with this topology is called the {\it Bratteli compactum} associated with $(V,E)$ and it is a Cantor space. We define the {\it Vershik map} $V_B$ on $X_B$ as follows: $V_B(x_{max})=x_{min}$; if $x=(e_1,e_2,\cdots)$ is not the maximal path, let $k$ be the smallest integer such that $e_k$ is not a maximal edge, let $f_k$ be the sucesor of $e_k$ on $E_k$, and define $V_B(x)=(f_1,f_2,\cdots, f_{k-1},f_k,e_{k+1},e_{k+2},\cdots)$, where $(f_1,\cdots, f_{k-1})$ is the minimal finite path on $E_1\circ E_2\circ \cdots \circ E_{k-1}$ with range equal to $s(f_k)$. The system $(X_B,V_B)$ is called the {\it Bratteli-Vershik dynamical system} associated with $(V,E,\geq)$.

It was shown by Herman, Putnam and Skau \cite{HPS} that for any Cantor minimal system $(X,T)$, there exists a properly ordered Bratteli diagram $(V,E,\geq)$ such that $(X,T)$ and $(X_B,V_B)$ are conjugate. In this case, $(V,E,\geq)$ is called a Bratteli-Vershik {\it model} for $(X,T)$.

To a Bratteli diagram $(V,E)$ is associated a dimension group with unit denoted $K_0(V,E)$. By definition, $K_0(V,E)$ is the inductive limit of the system
$$ \Z^{|V_0|}  \xrightarrow[]{A_0} \Z^{|V_1|}   \xrightarrow[]{A_1} \Z^{|V_2|}\xrightarrow[]{A_2} \cdots, $$
endowed with the induced order, where $(A_i)_{i\in\N}$ is the sequence of incidence matrices of $(V,E)$. The order unit in $K_0(V,E)$ is the element $[1,0]\in \varinjlim\limits_{i}(\Z^{|V_i|},A_i)$. Two Bratteli diagrams $(V,E)$ and $(V',E')$ have isomorphic dimension groups with unit if and only if $(V',E')$ can be obtained from $(V,E)$ by a finite number of telescopings and microscopings. If $(V,E,\geq)$ is a properly ordered Bratteli diagram and $(X_B,V_B)$ is its associated Bratteli-Vershik system, then one has $K_0(V,E)\cong K^0(X_B,V_B)$ as ordered groups with unit.

A simple Bratteli diagram $(V,E)$ has the {\it equal path number property} if for all $i\geq 1$, for all $u,v\in V_i$, $|r^{-1}(u)|=|r^{-1}(v)|$. Equivalently, $(V,E)$ has the equal path number property if for all $i\geq 1$, for all $u,v\in V_i$, the number of finite paths from $V_0$ to $u$ and from $V_0$ to $v$ are equal. In terms of the matrices, this means that for all $i\in\N$, the sum of the entries on a row of $A_i$ is constant.  This property is called the {\it equal row sum (ERS)} property of the matrix $A_i$. The equal path number property was introduced in \cite{noruegos}, where the authors prove the following result.  
\begin{theorem}\cite[Theorem 8]{noruegos}\label{teo:ers}
The family of expansive Bratteli-Vershik systems associated with Bratteli diagrams with the equal path number property coincides with the family of Toeplitz flows up to conjugacy.  
\end{theorem}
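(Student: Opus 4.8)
We outline a proof. The plan is to route both implications through the classical fact that Toeplitz flows are precisely the expansive Cantor minimal systems arising as almost one-to-one extensions of odometers (see, e.g., \cite{Do95}).

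For ``equal path number property $\Rightarrow$ Toeplitz'', start from a properly ordered $(V,E,\geq)$ with the equal path number property, and write $h_i$ for the common number of finite paths from $V_0$ to a vertex of $V_i$, so that $h_i\mid h_{i+1}$. The first point to make is that the equal path number property says exactly that in the $i$-th Kakutani--Rokhlin partition of $(X_B,V_B)$ every tower has the same height $h_i$; so the map $\pi_i\colon X_B\to\Z/h_i\Z$ recording which floor a path occupies is continuous, satisfies $\pi_i\circ V_B=\pi_i+1$, and is compatible with the projections $\Z/h_{i+1}\Z\to\Z/h_i\Z$, whence the $\pi_i$ assemble into a factor map $\pi\colon X_B\to\Z_{\bar h}:=\varprojlim_i\Z/h_i\Z$ onto the $\bar h$-adic odometer. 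The key step is to check that $\pi$ is almost one-to-one: $\pi^{-1}(\hat 0)=\bigcap_i\pi_i^{-1}(0)$ is the set of infinite paths all of whose edges are minimal, which by properness of the order is the single point $x_{\min}$; and since the injectivity locus of a factor map between compact metric systems is a $G_\delta$, here nonempty and $V_B$-invariant, minimality of $\Z_{\bar h}$ forces it to be dense, hence residual. Having established that $(X_B,V_B)$ is an expansive Cantor minimal system which is an almost one-to-one extension of an odometer, I would conclude that it is conjugate to a Toeplitz flow by reading off a Toeplitz sequence: fix a finite clopen generating partition $\{U_a\}_{a\in\cA}$ and let $\omega\in\cA^\Z$ be the itinerary of $x_{\min}$; for each $n$ the point $V_B^n x_{\min}$ is the unique $\pi$-preimage of $\hat 0+n$, so there is a clopen $W\ni\hat 0+n$ with $\pi^{-1}(W)\subseteq U_{\omega_n}$, and any such $W$ contains a coset $(\hat 0+n)+p\,\Z_{\bar h}$, which forces $\omega_{n+kp}=\omega_n$ for all $k$; minimality gives $\overline{\{S^k\omega\}}=X_B$.

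For the converse, ``Toeplitz $\Rightarrow$ equal path number property'', let $(X,S)$ be generated by a Toeplitz sequence $x$ with period structure $p_1\mid p_2\mid\cdots$ and let $\pi\colon X\to\Z_{\bar p}$ be its almost one-to-one odometer factor. I would pick $z\in X$ with $\pi^{-1}(\pi(z))=\{z\}$, translate the odometer coordinate so that $\pi(z)=\hat 0$, and set $B_i:=\pi^{-1}(p_i\Z_{\bar p})$; then $B_1\supseteq B_2\supseteq\cdots$ are clopen, $\bigcap_iB_i=\{z\}$ and $\bigcap_iS^{p_i-1}B_i=\{S^{-1}z\}$, and $\{B_i,SB_i,\dots,S^{p_i-1}B_i\}$ is a Kakutani--Rokhlin partition of $X$ consisting of one tower of height $p_i$ (its $\pi$-image being the standard height-$p_i$ tower of the odometer). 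The next step is to refine the bases: choose recursively finite clopen partitions $\mathcal Q_i$ of $B_i$, at each stage fine enough that the induced tower partitions $\mathcal P_i=\{S^jQ:Q\in\mathcal Q_i,\ 0\le j<p_i\}$ are nested (a constraint resolved by finitely many clopen sets because $p_i\mid p_{i+1}$) and that, from some stage on, $\mathcal P_i$ refines a fixed finite clopen partition generating under $S$ --- possible since $(X,S)$ is expansive. Applying the Herman--Putnam--Skau construction to the nested generating sequence $(\mathcal P_i)_i$ of Kakutani--Rokhlin partitions produces a properly ordered Bratteli diagram $(V,E,\geq)$ with $V_i\leftrightarrow\mathcal Q_i$ whose Bratteli--Vershik system is conjugate to $(X,S)$, the properness coming from $\bigcap_iB_i$ and $\bigcap_iS^{p_i-1}B_i$ being singletons. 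Finally, in this diagram the tower over every vertex of $V_i$ has height $p_i$, and the height of the tower over a vertex is the number of finite paths from $V_0$ to it, so $(V,E)$ has the equal path number property.

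I expect the main obstacle to be organizational rather than conceptual: in both directions one must produce a sequence of Kakutani--Rokhlin partitions that is simultaneously nested, generating, and compatible with the odometer structure (equivalently, with one tower height per level), and then verify that the resulting order is proper and its Vershik map conjugate to $S$. The conceptual content --- that the equal path number property is precisely the combinatorial shadow of an odometer factor, and that expansivity is what promotes ``almost one-to-one extension of an odometer'' to ``Toeplitz flow'' --- is what needs to be got right; the rest is the standard dictionary between Kakutani--Rokhlin partitions and ordered Bratteli diagrams.
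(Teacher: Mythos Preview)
The paper does not prove this statement: \cref{teo:ers} is quoted verbatim from \cite[Theorem 8]{noruegos} and is used as a black box in the proof of \cref{thm:divisible_toeplitz}. There is therefore no ``paper's own proof'' to compare your proposal against.

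That said, your outline is essentially the argument of Gjerde and Johansen in \cite{noruegos}. In both directions you correctly identify the structural content: the equal path number property is equivalent to every Kakutani--Rokhlin tower at a given level having the same height, which is exactly what produces (respectively, is produced by) an odometer factor; expansivity then upgrades ``almost one-to-one extension of an odometer'' to ``Toeplitz flow'' via the characterization in \cite{Do95}. The only places where your sketch is thin are routine: in the forward direction you should say a word about why the generating clopen partition exists (zero-dimensionality plus expansivity), and in the converse you are implicitly invoking the full Herman--Putnam--Skau machinery \cite{HPS} to pass from the nested generating sequence of Kakutani--Rokhlin partitions to a properly ordered Bratteli diagram whose Vershik system is conjugate to $(X,S)$. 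Neither of these is a gap, and your closing paragraph shows you are aware of them.
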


\subsection{Morphisms and $\cS$-adic systems}\label{subec:Sadic}
\noindent Let $\cA$, $\cB$ be two finite alphabets. Let $\tau \colon \cA^{\ast} \to \cB^{\ast}$ be a morphism. We say that $\tau$ is {\it non-erasing} if the image of any letter is a non-empty word. The {\it incidence matrix} of $\tau$ is the $|\cB|\times|\cA|$ matrix whose entry at a position $(b,a)$ is the number of times that $b$ appears in $\tau(a)$. For $a\in\cA$, the length of the word $\tau(a)\in \cB^{\ast}$ is denoted $|\tau(a)|$. The morphism $\tau$ is {\em positive} if the entries of $|\cB|\times|\cA|$ are all positive and is {\it left proper} (resp. {\it right proper}) if there exists a letter $b\in\cB$ such that for every $a\in\cA$, $\tau(a)$ starts with $b$ (resp. ends with $b$); it is {\it proper} if is both left and right proper. We say that $\tau$ is a {\it hat morphism} if for all $a,b\in \cA$, the letters appearing in $\tau(a)$ and $\tau(b)$ are all distinct. By concatenation, a morphism $\tau\colon \cA^{\ast}\to \cB^{\ast}$ can be extended to $\cA^\N$ and $\cA^\Z$.\\

For a non-erasing morphism $\tau \colon \cA^{\ast} \to \cB^{\ast}$, define $\|\tau\|=\max_{a\in \cA}\{|\tau(a)|\}$ and $\langle \tau \rangle= \min_{a\in \cA}\{|\tau(a)|\}$.
The following notion was introduced in \cite[Section 6]{DDMP20} in the context of finite topological rank minimal subshifts. Assume that $\cB=\{b_1,\ldots,b_{|\cB|}\}$ and for $a\in \cA$, write $\tau(a)=b_{i_1}^{\ell_1}b_{i_2}^{\ell_2}\cdots b_{i_{k(a)}}^{\ell_{k(a)}}$, for some $i_1,\ldots,i_{k(a)}\in \{1,\ldots,|\cB|\}$ and where $b_{i_{j+1}}\neq b_{i_j}$ for all $j=1,\ldots,k(a)-1$. The integer $k(a)$ represents the number of times one needs to switch letters while writing $\tau(a)$, plus one.  
The {\it repetition complexity} of $\tau$ is defined as
\[\textrm{r-comp}(\tau)= \sum_{a\in \cA} k(a). \] 

Note that if $\tau$ is positive then $k(a)\geq |\cB|$ for all $a\in \cA$ and $\textrm{r-comp}(\tau)\geq |\cA||\cB|$. For a morphism $\tau\colon \cA\to \cB$, we let $\tau(\cA^{\Z})$ denote the smallest subshift containing $\tau((x_i)_{i\in \Z})$ for all $(x_i)_{i\in \Z}\in \cA^{\Z}$. 

The following result is a consequence of Lemma 6.10 and Theorem 6.11 in \cite{DDMP20}.

\begin{theorem}\label{theo:boundedrepetition} 
Let $\sigma_1 \colon\cC^{\ast} \to \cD^{\ast}$, $\sigma_2 \colon  \cB^{\ast} \to \cC^{\ast}$, and $\sigma_3 \colon  \cA^{\ast} \to \cB^{\ast}$ be three morphisms and assume that $\sigma_2$ and $\sigma_3$ are positive.
Then we have that
\[{ p_{\sigma_1 \circ \sigma_2\circ \sigma_3 (\cA^{\Z}) } (n) \leq \begin{cases} (|\cC| + (|\cB|+1)\textrm{r-comp}(\sigma_2))n  &  \text{ if } n \in I \\ (|\cB|+|\cC|+\textrm{r-comp}(\sigma_2) + (|\cA|+1)\textrm{r-comp}(\sigma_3))  n & \text{ if } n \in J \end{cases} }\]
where $I=[\| \sigma_1 \| , \langle \sigma_1 \circ \sigma_2\rangle)$ and $J=[\langle  \sigma_1 \circ \sigma_2  \rangle , \|\sigma_1 \circ \sigma_2\|)$.
\end{theorem}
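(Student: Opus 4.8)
The plan is to read the complexity of $X:=\sigma_1\circ\sigma_2\circ\sigma_3(\cA^{\Z})$ off the nested block decomposition of its elements: up to a shift every point of $X$ has the form $\sigma_1(\sigma_2(\sigma_3(z)))$ with $z\in\cA^{\Z}$, and hence carries three superimposed partitions of the $\cD$-word into blocks — into blocks $\sigma_1(c)$ (for $c$ a letter of $\sigma_2\circ\sigma_3(z)$), into blocks $\sigma_1\circ\sigma_2(b)$ (for $b$ a letter of $\sigma_3(z)$), and into blocks $\sigma_1\circ\sigma_2\circ\sigma_3(a)$ (for $a$ a letter of $z$). The two intervals $I$ and $J$ record at which of these scales a word $w$ of length $n$ has to be desubstituted: for $n\in I$ one has $n\ge\|\sigma_1\|$, so $w$ overshoots every block $\sigma_1(c)$, whereas $n<\langle\sigma_1\circ\sigma_2\rangle$ keeps $w$ inside $\sigma_1\circ\sigma_2(bb')$ for a length-$2$ word $bb'\in\cL_{\sigma_3(\cA^{\Z})}$; for $n\in J$ the word $w$ overshoots the shortest block $\sigma_1\circ\sigma_2(b)$ while still being short relative to the scale of $\sigma_1\circ\sigma_2\circ\sigma_3$. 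In both regimes the counting is exactly the one performed in \cite{DDMP20}, so the proof will consist of invoking Lemma 6.10 and Theorem 6.11 there and matching the constants; I describe the mechanism below.

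For $n\in I$ I would apply Lemma 6.10 of \cite{DDMP20} with $\sigma_1$ playing the role of the \emph{outer} morphism and $\sigma_2$ the role of the \emph{inner}, positive one. A word $w$ of length $n$ in $X$ either lies inside a single homogeneous run-block $\sigma_1(c)^{\ell}$ arising from a maximal run $c^{\ell}$ of $\sigma_2(bb')$ — such a $w$ is determined by $c$ together with its starting offset modulo $|\sigma_1(c)|\le\|\sigma_1\|\le n$, which gives at most $|\cC|\,n$ possibilities — or it straddles a run-boundary of $\sigma_2(bb')$. The number of run-boundaries inside the image of a single letter under $\sigma_2$ is counted by the quantity $k(\cdot)$ from the definition of repetition complexity, so summing over letters and the single extra junction between $b$ and $b'$ produces the factor $(|\cB|+1)\,\textrm{r-comp}(\sigma_2)$, again multiplied by the at most $n$ offsets. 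This is precisely the statement of Lemma 6.10, and $\sigma_1$ may here be an arbitrary morphism: being the outermost map it enters the estimate only through $\|\sigma_1\|$ and $\langle\sigma_1\circ\sigma_2\rangle$, never through its alphabet size or repetition complexity.

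For $n\in J$ I would instead use Theorem 6.11 of \cite{DDMP20} applied to the composition $\sigma_1\circ\sigma_2\circ\sigma_3$ — equivalently, Lemma 6.10 with $\sigma_1\circ\sigma_2\colon\cB^{\ast}\to\cD^{\ast}$ as the outer morphism and the positive $\sigma_3$ as the inner one — with $n$ now ranging in $[\langle\sigma_1\circ\sigma_2\rangle,\|\sigma_1\circ\sigma_2\|)$. At this scale the basic blocks are the words $\sigma_1\circ\sigma_2(b)$, which, unlike the $\sigma_1(c)$, are no longer homogeneous; counting the words of length $n$ contained in one such block reuses the $\cC$-level run decomposition of $\sigma_2(b)$ and produces the terms $|\cC|+\textrm{r-comp}(\sigma_2)$, the term $|\cB|$ accounts for the words straddling a $\sigma_1\circ\sigma_2$-block boundary, and the desubstitution of $\sigma_3$ contributes $(|\cA|+1)\,\textrm{r-comp}(\sigma_3)$ exactly as $\sigma_2$ did at the previous level. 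Adding these and using $\langle\sigma_1\circ\sigma_2\rangle\le n<\|\sigma_1\circ\sigma_2\|$ to bound the offsets yields the stated estimate.

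I do not expect any genuinely new argument to be needed; the delicate point — and where an error is easiest — is the bookkeeping that translates the hypotheses and conclusions of \cite[Lemma 6.10, Theorem 6.11]{DDMP20} into the present formulation. One has to check that the outer/inner roles are assigned correctly in each interval, that the positivity available here ($\sigma_2$ and $\sigma_3$ positive, $\sigma_1$ arbitrary) is exactly what those statements require, that $n\ge\|\sigma_1\|$ (resp. $n\ge\langle\sigma_1\circ\sigma_2\rangle$) is what forces a length-$n$ word to meet a cut at the relevant level while $n<\langle\sigma_1\circ\sigma_2\rangle$ (resp. $n<\|\sigma_1\circ\sigma_2\|$) is what confines it to the next scale down, and finally that the numerical constants match; for this last point one uses that for a positive morphism $\tau\colon\cP^{\ast}\to\cQ^{\ast}$ one has $k(a)\ge|\cQ|$ for every letter $a$, hence $\textrm{r-comp}(\tau)\ge|\cP||\cQ|$, which lets the lower-order terms be absorbed into the stated coefficients.
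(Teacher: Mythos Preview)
Your proposal is correct and follows exactly the paper's approach: the paper does not prove this theorem at all but simply records it as ``a consequence of Lemma 6.10 and Theorem 6.11 in \cite{DDMP20}'', which is precisely what you invoke. Your write-up in fact supplies more of the matching-of-constants bookkeeping than the paper itself does.
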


We briefly recall the definition of $\cS$-adic subshifts (see \cite{BSTY} for more details). A {\it directive sequence} ${\bt}=(\tau_i)_{i\geq 0}$ is a sequence of non-erasing morphisms $\tau_i\colon \cA_{i+1}^{\ast}\to \cA_i^{\ast}$, $i\in \N$. We let $\tau_{[i,k)}$ denote the composition $\tau_i\circ\tau_{i+1}\circ\cdots\circ \tau_{k-1}$. We say that $\bs$ is {\it everywhere growing} if $\langle \tau_{[0,i)} \rangle=\min_{a\in \cA_i}\{|\tau_{[0,i)}(a)|\}$ tends to $\infty$ as $i\to\infty$. We say that $\bs$ is {\it primitive} if for every $i\geq 0$, there exists $k\geq i$ such that $\tau_{[i,k)}$ has a positive incidence matrix.

For $i\geq 0$, the {\it language of order $i$} $L_{{\bs}}^{(i)}$ associated with ${\bs}$ is defined as
$$L_{{\bs}}^{(i)}=\{w\in\cA_i^{\ast}: \exists k>i, \exists a\in \cA_k, w\prec \tau_{[i,k)}(a)\}.$$
For each $i\geq 0$, the set $X_{{\bs}}^{(i)}$ is the set of infinite words $x\in\cA_i^\Z$  whose factors belong to $L_{{\bs}}^{(i)}$. 
We set $X_{{\bs}}=X_{{\bs}}^{(0)}$, $L_{{\bs}}=L_{{\bs}}^{(0)}$ and call $(X_{{\bs}}, S)$ the {\it $\cS$-adic system generated by the directive sequence} ${\bs}$, where $S$ is the shift transformation. For all $\ell \geq 1$, we denote by $L_{{\bs},\ell}^{(i)}$ the subset of length $\ell$ factors of $L_{{\bs}}^{(i)}$. When the directive sequence $\bs$ is primitive, $(X_{\bs},S)$ is a minimal subshift.
 
As a consequence of Theorem \ref{theo:boundedrepetition}, we get the following result. 

\begin{proposition} \label{prop:Complexitybound}
Let $(X_{\bs},S)$ be the $\cS$-adic system generated by the directive sequence $\bs = (\tau_i \colon  \mathcal{A}_{i+1}^{\ast}\to \mathcal{A}_i^{\ast})_{i\geq 0}$. For $n\in \N, n\geq \|\tau_0\|$, let $i=i(n)$ such that 
$n\in [\|\tau_{[0,i)}\|, \|\tau_{[0,i+1)} \| )$.
Then 
\[ p_{X_{\bs}} (n) \leq \begin{cases} (|\cA_{i}| + (|\cA_{i+1}|+1)\textrm{r-comp}(\tau_i))n  & \text{ if }  n \in  I_i \\ (|\cA_{i+1}|+|\cA_{i}|+\textrm{r-comp}(\tau_i) + (|\cA_{i+2}|+1)\textrm{r-comp}(\tau_{i+1}))  n & \text{ if } n \in J_i \end{cases} \]
where $I_i=[\| \tau_{[0,i)} \| , \langle \tau_{[0,i+1)}\rangle)  $ and $J_i= [\langle  \tau_{[0,i+1)} \rangle , \|\tau_{[0,i+1)}\|) $. 

\end{proposition}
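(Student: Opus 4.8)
The plan is to derive the bound by applying \cref{theo:boundedrepetition} to the three-fold factorization
\[ \tau_{[0,i+2)} \;=\; \sigma_1 \circ \sigma_2 \circ \sigma_3, \qquad \sigma_1 = \tau_{[0,i)},\quad \sigma_2 = \tau_i,\quad \sigma_3 = \tau_{i+1}, \]
and then passing from the resulting estimate on $p_{\tau_{[0,i+2)}(\cA_{i+2}^{\Z})}$ to $p_{X_{\bs}}$. Since the morphisms of the directive sequence are positive, $\sigma_2$ and $\sigma_3$ are positive and \cref{theo:boundedrepetition} applies. With the identifications $\cC=\cA_i$, $\cB=\cA_{i+1}$, $\cA=\cA_{i+2}$ (in the notation of that theorem) and using $\sigma_1\circ\sigma_2=\tau_{[0,i+1)}$, its two intervals become exactly $I_i=[\|\tau_{[0,i)}\|,\langle\tau_{[0,i+1)}\rangle)$ and $J_i=[\langle\tau_{[0,i+1)}\rangle,\|\tau_{[0,i+1)}\|)$, and its two right-hand sides are precisely the two expressions in the statement. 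Here I would also record that, because $\tau_i$ is positive, $\tau_i(a)$ contains every letter of $\cA_i$, so $|\tau_{[0,i+1)}(a)|\geq\sum_{b\in\cA_i}|\tau_{[0,i)}(b)|\geq\|\tau_{[0,i)}\|$ for every $a$; hence $\langle\tau_{[0,i+1)}\rangle\geq\|\tau_{[0,i)}\|$, so that $I_i\cup J_i=[\|\tau_{[0,i)}\|,\|\tau_{[0,i+1)}\|)$ and the chosen $n$ indeed lands in $I_i$ or in $J_i$.

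It then remains to check that $p_{X_{\bs}}(n)\leq p_{\tau_{[0,i+2)}(\cA_{i+2}^{\Z})}(n)$, for which it suffices to establish the language inclusion $\cL_{X_{\bs}}\subseteq\cL_{\tau_{[0,i+2)}(\cA_{i+2}^{\Z})}$. Let $w\in\cL_{X_{\bs}}=L_{\bs}^{(0)}$, so that $w\prec\tau_{[0,k)}(a)$ for some $k\geq 1$ and $a\in\cA_k$. If $k\geq i+2$, then $\tau_{[0,k)}(a)=\tau_{[0,i+2)}(\tau_{[i+2,k)}(a))$ is a factor of $\tau_{[0,i+2)}(y)$ for any $y\in\cA_{i+2}^{\Z}$ containing the word $\tau_{[i+2,k)}(a)$. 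If $k<i+2$, then $\tau_{[k,i+2)}$ is a composition of positive morphisms, hence positive, so the letter $a$ occurs in $\tau_{[k,i+2)}(b)$ for every $b\in\cA_{i+2}$; picking $y\in\cA_{i+2}^{\Z}$ in which $b$ occurs gives $\tau_{[0,k)}(a)\prec\tau_{[0,k)}(\tau_{[k,i+2)}(b))=\tau_{[0,i+2)}(b)\prec\tau_{[0,i+2)}(y)$. In either case $w\in\cL_{\tau_{[0,i+2)}(\cA_{i+2}^{\Z})}$, and combining this with the previous paragraph completes the proof.

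I expect this to be essentially routine once \cref{theo:boundedrepetition} is available: the one spot that really needs an idea is the case $k<i+2$ of the language inclusion, where positivity is used precisely to make the low-level letter $a$ reappear inside $\tau_{[k,i+2)}(b)$; the rest is bookkeeping to see that \cref{theo:boundedrepetition}, read with the identifications above, produces exactly the two stated cases on $I_i$ and $J_i$. (If one preferred not to assume every $\tau_i$ positive but only that $\bs$ is primitive with $\tau_i,\tau_{i+1}$ positive, the same inclusion can be recovered by replacing $\cA_{i+2}^{\Z}$ with $X_{\bs}^{(i+2)}$ and using minimality of that subshift, leaving the estimate from \cref{theo:boundedrepetition} unchanged.)
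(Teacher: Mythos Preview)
Your proposal is correct and follows exactly the same approach as the paper: apply \cref{theo:boundedrepetition} with $\sigma_1=\tau_{[0,i)}$, $\sigma_2=\tau_i$, $\sigma_3=\tau_{i+1}$, and use the inclusion $X_{\bs}\subseteq \tau_{[0,i+2)}(\cA_{i+2}^{\Z})$. The paper's proof is a two-line sketch that simply invokes this inclusion; you have spelled out the language-inclusion argument and the check that $I_i\cup J_i$ covers $[\|\tau_{[0,i)}\|,\|\tau_{[0,i+1)}\|)$, and you have made explicit the positivity hypothesis on $\tau_i,\tau_{i+1}$ that \cref{theo:boundedrepetition} needs (and that the paper uses without comment).
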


\begin{proof}
It follows from applying \cref{theo:boundedrepetition} for the morphisms $\sigma_3=\tau_{i+1}\colon\cA_{i+2}^{\ast}\to \cA_{i+1}^{\ast}$, $\sigma_2=\tau_{i}\colon\cA_{i+1}^{\ast}\to \cA_{i}^{\ast}$ and $\sigma_1=\tau_{[0,i)}\colon\cA_{i}^{\ast}\to \cA_{0}^{\ast}$, and recalling that $X_{\bs}\subseteq \sigma_1\circ \sigma_2 \circ \sigma_3(\cA_{i+2}^{\Z})$.
\end{proof}

The following notion connecting ordered Bratteli diagrams and $\cS$-adic systems was originally  introduced in \cite{fdurand}. Given an ordered Bratteli diagram $(V,E,\geq)$, let $i\geq 1$ and consider $V_i$, $V_{i+1}$ as finite alphabets. For every $u\in V_{i+1}$, consider the ordered list $(e_1,e_2,\cdots, e_k)$ of edges on $E_{i+1}$ arriving to $u$, and let $(v_1,v_2,\cdots, v_k)$ the list of labels of the sources of these edges in $V_i$. This defines a morphism $\tau_i\colon V_{i+1}^{\ast} \to V_{i}^{\ast}$,  $u\mapsto v_1v_2\cdots v_k$. For $i=0$, we let $\tau_0\colon V_1^{\ast} \to E_1^{\ast}$ denote the morphism such that  
$\tau(v)=e_1(v)\ldots e_\ell(v)$, where  $e_1(v),\ldots,e_\ell(v)$ are the edges connecting $V_0$ with $v$ according to the order of the diagram. For $i\geq 0$ we say that $\tau_i$ is {\it the morphism read on $(V,E,\geq)$ at level $i$} and that the directive sequence ${\boldsymbol \tau}=(\tau_i)_{i\geq 0} $ is the {\it sequence of morphisms read  on} $(V,E,\geq)$. Note that the morphism read on $(V,E,\geq)$ at level $0$ is a hat morphism.\\

The previous notion gives an $\cS$-adic subshift naturally associated with each Bratteli-Vershik system. The following result, which is a slight modification of \cite[Proposition 4.5]{DDMP20}, states when these two representations are conjugate as dynamical systems (see also \cite[Proposition 2.2]{DurandLeroy2012}).
\begin{proposition}\cite[Proposition 4.5]{DDMP20}\label{prop:isom}
Let $(X_{\bs},S)$ be the minimal $\cS$-adic subshift defined by the directive sequence $\bs  = (\tau_i)_{i\geq 0}$, where $\tau_0$ is a hat morphism and $\tau_i$ is proper for all $i\geq 1$. Suppose that all morphisms $\tau_i$ extend by concatenation to a one-to-one map from $X^{(i+1)}_{\bs}$ to $X^{(i)}_{\bs}$. Then, $(X_{\bs},S)$ is conjugate to the Bratteli-Vershik system $(X_{B} , V_{B})$ associated with the ordered Bratteli diagram $(V,E,\geq)$, where $\bs$ is the sequence of morphisms read  on $(V,E,\geq)$.
\end{proposition}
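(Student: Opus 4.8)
The plan is to build the ordered Bratteli diagram $(V,E,\ge)$ directly from $\bs$ and then produce the conjugacy using the recognizability coming from the one-to-one hypothesis.

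\emph{Construction of the diagram.} Put $V_0=\{\ast\}$ and $V_i=\cA_i$ for $i\ge1$. For $i\ge1$ and $u\in V_{i+1}$, if $\tau_i(u)=v_1v_2\cdots v_k$, declare that there are edges $e_1<e_2<\cdots<e_k$ into $u$ with $s(e_j)=v_j$. For $i=0$, declare $\tau_0(v)\in\cA_0^{\ast}$ to list, in order, the edges from $\ast$ to $v$: since $\tau_0$ is a hat morphism, each letter of $\cA_0$ occurs in exactly one $\tau_0(v)$ and only once, so $E_1$ is canonically identified with $\cA_0$ and the level-$0$ order is unambiguous. By construction the incidence matrix of $\tau_i$ is $A_i$ and $\bs$ is the sequence of morphisms read on $(V,E,\ge)$. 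The diagram is simple since $\bs$ is primitive, and it is properly ordered: the distinguished first (resp.\ last) letter of each proper morphism $\tau_i$, $i\ge1$, forces the source of the minimal (resp.\ maximal) edge into every vertex of $V_{i+1}$, so there is a unique minimal path $x_{min}$ and a unique maximal path $x_{max}$. Hence $(X_B,V_B)$ is a well-defined minimal Cantor system.

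\emph{The conjugacy.} The hypothesis that each $\tau_i$ is one-to-one on $X_{\bs}^{(i+1)}$ supplies the recognizability property that every $x\in X_{\bs}^{(i)}$ has a unique decomposition $x=S^{k}\tau_i(y)$ with $y\in X_{\bs}^{(i+1)}$ and $0\le k<|\tau_i(y_0)|$, the assignments $x\mapsto y$ and $x\mapsto k$ being continuous (the latter takes finitely many values, hence is determined by a bounded window of $x$ around the origin). Iterating, every $x=x^{(0)}\in X_{\bs}$ determines a unique sequence $(x^{(i)},k_{i+1})_{i\ge0}$ with $x^{(i)}=S^{k_{i+1}}\tau_i(x^{(i+1)})$. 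Define $\Phi\colon X_{\bs}\to X_B$ by $\Phi(x)=(e_1,e_2,\dots)$, where $e_n$ is the $(k_n+1)$-th edge into $v_n:=x^{(n)}_0$ in the level-$n$ order; since $s(e_{n+1})=x^{(n)}_0=r(e_n)$ this is a genuine infinite path, and $\Phi$ is continuous by the locality above. One checks $\Phi\circ S=V_B\circ\Phi$ directly against the Vershik successor rule: applying $S$ advances by one the position of the marked origin in the nested tower structure recorded by the pairs $(v_n,k_n)$, which increments $e_1$ unless it is maximal, in which case the increment cascades upward exactly as $V_B$ prescribes.

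\emph{Bijectivity, and the main obstacle.} It remains to show $\Phi$ is a bijection. A path $(e_n)_n\in X_B$ records, for each $n$, the word $\tau_{[0,n)}(v_n)$ and the position $j_n$ of the origin inside it; the sequence is nested and $j_n$ is non-decreasing, increasing at step $n$ precisely when $e_n$ is not minimal. As $\bs$ is everywhere growing, $j_n\to\infty$ for every path that is not eventually minimal, and symmetrically $|\tau_{[0,n)}(v_n)|-j_n\to\infty$ for every path that is not eventually maximal; so every path outside the countable $V_B$-orbit of $x_{min}$ has a unique $\Phi$-preimage, read off the growing central windows, and by equivariance this reduces the bijectivity of $\Phi$ to showing that $x_{min}$ itself has exactly one preimage (surjectivity at the seam being clear by compactness). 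That last point is the delicate one, and where I expect the real work: it is exactly where the recognizability hypothesis and the hat-morphism condition at level $0$ are used, and it is the content of the argument of \cite[Proposition 4.5]{DDMP20}, which one adapts here with only cosmetic changes. Once $\Phi$ is a continuous equivariant bijection between the compact systems $(X_{\bs},S)$ and $(X_B,V_B)$, it is automatically a conjugacy, and the proof is complete.
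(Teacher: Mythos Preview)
The paper does not give its own proof of this statement: it is quoted from \cite[Proposition~4.5]{DDMP20} (with a pointer also to \cite{DurandLeroy2012}) and then used as a black box. Your sketch therefore already goes further than the paper does, and its architecture---read $(V,E,\ge)$ off $\bs$, use properness of the $\tau_i$ for $i\ge1$ to obtain a properly ordered diagram, and define $\Phi$ via the nested desubstitution data---is the standard one and is sound in outline.

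There is, however, one genuine elision worth flagging. You assert that injectivity of $\tau_i$ on $X_{\bs}^{(i+1)}$ ``supplies the recognizability property'' that every $x\in X_{\bs}^{(i)}$ admits a \emph{unique} pair $(y,k)$ with $x=S^{k}\tau_i(y)$ and $0\le k<|\tau_i(y_0)|$. Injectivity only rules out the situation $k=k'$ with $y\ne y'$; it says nothing against $\tau_i(y)=S^{j}\tau_i(y')$ for some $0<j<|\tau_i(y'_0)|$, i.e.\ two distinct cuttings of the same bi-infinite word. Passing from the stated one-to-one hypothesis to this recognizability is precisely the substantive step (carried out in \cite{DDMP20}, or alternatively obtainable from \cite{BSTY} using aperiodicity of the minimal $X_{\bs}^{(i)}$), and it is needed already to make $\Phi$ well-defined---not only at the seam over $x_{min}$ where you locate the difficulty. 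At level~$0$ the hat condition does give recognizability for free, as you implicitly use; it is the levels $i\ge1$ that require the additional argument.
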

In the following lemma we give sufficient conditions for a simple Bratteli diagram to have an order such that the sequence of morphisms read on the ordered diagram satisfies the hypothesis of \cref{prop:isom}.

\begin{lemma}\label{lema:injective}
Let $(V,E)$ be a simple Bratteli diagram with sequence of incidence matrices $(A_i)_{i\in\N}$. For all $i\in\N$, let $V_i$ be the set of vertices at level $i$ and $m_i=|V_i|$. Suppose that for all $i\geq 1$, $A_i(j,1)> m_{i+1}$ for all $1\leq j\leq m_{i+1}$. Then it is possible to give $(V,E)$ an order $\geq$ such that the sequence $\boldsymbol{\tau}=(\tau_i)_{i\in\N}$ of morphisms read on $(V,E,\geq)$ verifies that for all $i\geq 1$, $\tau_i:V_{i+1}^{\ast}\to V_i^{\ast}$ is proper, injective and extends by concatenation to a one-to-one map from $X^{(i+1)}_{\boldsymbol{\tau}}$ to $X^{(i)}_{\boldsymbol{\tau}}$.
\end{lemma}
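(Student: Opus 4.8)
The plan is to construct the order on $(V,E)$ level by level so that each morphism $\tau_i$ ($i \geq 1$) read on the ordered diagram is proper, injective on letters, and extends to a one-to-one map $X^{(i+1)}_{\boldsymbol\tau} \to X^{(i)}_{\boldsymbol\tau}$. Fix $i \geq 1$ and recall that for $u \in V_{i+1}$ the word $\tau_i(u)$ is the concatenation $v_1 v_2 \cdots v_k$ of the sources (in $V_i$) of the edges of $E_{i+1}$ arriving at $u$, listed in the chosen order; here $k = |r^{-1}(u)| = \sum_{j} A_i(u,j)$, and the letter $1 \in V_i$ (the first vertex) occurs $A_i(u,1) > m_{i+1}$ times. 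The idea is to use the abundance of copies of the letter $1$ to (a) make every $\tau_i(u)$ start and end with $1$, giving properness, and (b) insert, near the beginning of $\tau_i(u)$, a ``marker block'' of the form $1 v_? 1$ that is different for different $u$ and that cannot be matched by any internal occurrence, giving injectivity of the induced map.

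Concretely, here is the order I would put on $r^{-1}(u)$ for each $u \in V_{i+1}$. Since $A_i(u,1) > m_{i+1} \geq 2$, there are at least $m_{i+1}+1$ edges with source $1$. Enumerate $V_{i+1} = \{u_1, \dots, u_{m_{i+1}}\}$. For $u = u_\ell$, order the edges into $u$ so that $\tau_i(u_\ell)$ reads
\[
\tau_i(u_\ell) = 1\, \underbrace{1\,1\,\cdots\,1}_{\ell}\, 2\, 1 \, (\text{remaining letters in any order, all remaining }1\text{'s, then a final }1),
\]
that is: a $1$, then a run of $1$'s encoding $\ell$ via its length, then the letter $2$ (available since incidence matrices are those of a simple diagram — after the telescoping implicit in the hypothesis we may assume all entries positive, so $A_i(u,2)\geq 1$; if necessary one telescopes first so that this holds, which does not affect the conclusion up to conjugacy), then the rest of the letters, arranging that $\tau_i(u_\ell)$ ends in $1$. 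We have enough $1$'s to do this because we only spent $\ell + 2 \leq m_{i+1}+2 \le A_i(u,1)+1$ of them and at least one is left for the final letter. This makes $\tau_i$ left proper (every image starts with $1$) and right proper (every image ends with $1$), hence proper; and it makes $u \mapsto \tau_i(u)$ injective on letters, since distinct $\ell$ give distinct prefixes $1 1^\ell 2$.

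The remaining, and main, point is one-to-one-ness of the concatenation map $X^{(i+1)}_{\boldsymbol\tau} \to X^{(i)}_{\boldsymbol\tau}$, i.e. \emph{recognizability}: from a point $y = \cdots \tau_i(a_{-1})\tau_i(a_0)\tau_i(a_1)\cdots \in X^{(i)}_{\boldsymbol\tau}$ one must recover the cutting points and the letters $a_j$ uniquely. The marker word $w_\ell := 1\,1^{\ell}\,2$ occurs in $\tau_i(u_\ell)$ exactly once as a ``left-justified'' block — by construction it sits at the very front, and the only other $1$'s in $\tau_i(u_\ell)$ are the trailing $1$ and whatever $1$'s are buried among the ``remaining letters''; by choosing the ``remaining letters'' arrangement to avoid creating a second occurrence of any $w_{\ell'}$ (e.g. always separate consecutive $1$'s there by a non-$1$ letter, possible because each $\tau_i(u)$ contains enough non-$1$ letters once one telescopes so that $\sum_{j\neq 1}A_i(u,j)$ is large — again a harmless preliminary telescoping), one ensures $w_{m_{i+1}} = 1 1^{m_{i+1}} 2$ never occurs across a concatenation boundary or internally except at the designated spots. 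Then the occurrences of the longest marker $w_{m_{i+1}}$ in $y$ are spaced out, and scanning left from any designated marker to the preceding ``$2$ followed by $1$'' (the last letter of the previous block is $1$, the block before's marker is far away) pins down each boundary; once boundaries are known, the letter $a_j$ is read off from the prefix $1 1^{\ell} 2$. This is essentially the standard argument that proper, ``recognizable-by-a-prefix-code'' morphisms are one-to-one on the relevant subshift (cf. the circumstances of \cref{prop:isom}), and I expect the bookkeeping of ``no spurious marker occurrences'' — carried out after one preliminary telescoping of $(V,E)$ to make the non-pivot column sums large — to be the only genuinely delicate step. Since telescoping changes neither the dimension group with unit nor, by \cref{prop:isom}, the conjugacy class of the resulting Bratteli–Vershik system, proving the statement for a telescoping of $(V,E)$ suffices, and the lemma follows.
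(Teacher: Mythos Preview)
There are two genuine gaps.

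First, the lemma asks for an order on the given diagram $(V,E)$. You invoke telescoping several times---to guarantee that a letter $2$ is available ($A_i(u,2)\geq 1$), and to make $\sum_{j\neq1}A_i(u,j)$ large enough that the remaining $1$'s in the ``rest'' can each be separated by a non-$1$---and then assert that ``proving the statement for a telescoping of $(V,E)$ suffices.'' It does not: the conclusion is the existence of an order on $(V,E)$ itself. (For the downstream use in \cref{theo:main1} one could indeed telescope first, but that is not what is being proved here.)

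Second, even granting those extra assumptions, your recognizability argument does not close. You make every $\tau_i(u_\ell)$ start \emph{and end} with the letter $1$. At a concatenation boundary the trailing $1$'s of $\tau_i(u_a)$ merge with the initial block $1^{b+1}$ of $\tau_i(u_b)$, so the length of the maximal $1$-run preceding the first $2$ is $b+1$ plus however many $1$'s sit at the tail of $\tau_i(u_a)$. Separating $1$'s in the ``rest'' by non-$1$ letters does not by itself control this number (you would also have to pin down the penultimate letter of every block), and without that you can neither read off $b$ from the run length nor locate the cut inside the run. The alternative ``scan left to the preceding $2$ followed by $1$'' does not determine a unique position either, since you have placed a $2\cdot 1$ pair immediately after the marker as well.

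The paper avoids both problems with one stroke. Properness does not require the first and last letters of each $\tau_i(v_j)$ to coincide; the paper makes them \emph{different}, taking
\[
\tau_i(v_j)=u_1^{\,j}\,u_2^{A_i(j,2)}\,u_1^{A_i(j,1)-j}\,u_3^{A_i(j,3)}\cdots u_{m_i}^{A_i(j,m_i)},
\]
which starts with $u_1$ and ends with $u_{m_i}$. The two-letter word $u_{m_i}u_1$ then occurs in $\tau_i(x)$ \emph{exactly} at the concatenation boundaries (within a single block all the $u_{m_i}$'s are at the very end and are followed by nothing), so the cuts are read off immediately, and the prefix $u_1^{\,j}$ before the first $u_2$ identifies $j$. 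No telescoping and no interior bookkeeping are needed; only the hypothesis $A_i(j,1)>m_{i+1}\geq j$ is used.
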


\begin{proof}
For the edges of $E_1$, choose any order. Let $i\geq 1$, let $V_i=\{u_1,\cdots, u_{m_i}\}$ and $V_{i+1}=\{v_1,\cdots, v_{m_{i+1}}\}$. For every $v_j\in V_{i+1}$, we order the edges $r^{-1}(v_j)$ in the following manner. The first $j$ edges have source $u_1$ (this is possible since we assume that $A_i(j,1)> m_{i+1}$ for all $1\leq j\leq m_{i+1}$). The next $A_i(j,2)$ edges have source $u_2$. The remaining edges are ordered left/right.

The previous order gives the following morphism $\tau_i\colon V_{i+1}^{\ast}\to V_i^{\ast}$, 
$$\tau_i(v_j)=u_1^{\ell_{1,j}}u_2^{\ell_{2,j}}u_1^{\ell'_{1,j}}u_3^{\ell_{3,j}}u_4^{\ell_{4,j}}\cdots u_{m_i}^{\ell_{m_i,j}} \quad \forall 1\leq j\leq m_{i+1},$$
where $\ell_{1,j}=j,\ell'_{1,j}=A_i(j,1)-j$ and $\ell_{k,j}=A_i(j,k)$ for all $2\leq k\leq m_i$. It is clear that $\tau_i$ is proper. Since $\ell_{1,j}$ is different for each $v_j\in V_{i+1}$, $\tau_i$ is injective on $V_{i+1}$.

Let us show that $\tau_i$ extends by concatenation to a one-to-one map from $X^{(i+1)}_{\boldsymbol{\tau}}$ to $X^{(i)}_{\boldsymbol{\tau}}$. For $x\in X^{(i+1)}_{\boldsymbol{\tau}}$, we define the set $C_{\tau_i}(x)$ of {\it cutting points} in $\tau_i(x)$ as the following set,
$$C_{\tau_i}(x)=\{|\tau_i(x_{[0,\ell)})|:\ell >0\}\cup\{0\}\cup \{-|\tau_i(x_{[\ell,0)})|:\ell <0\}.$$
Note that for the order we have considered, for any word $x\in X^{(i+1)}_{\boldsymbol{\tau}}$, the cutting points of $\tau_i(x)$ are located exactly in those places where a letter $u_{m_i}$ is followed by a letter $u_1$.

Let $x,x'\in X^{(i+1)}_{\boldsymbol{\tau}}$ be two infinite words such that $\tau_i(x)=\tau_i(x')$. We want to prove that $x=x'$. Since $\tau_i(x)=\tau_i(x')$, each time we see the word $u_{m_i}u_1$ in $\tau_i(x)$, we see it in $\tau_i(x')$ as well, and {\it vice versa}, so the cutting points of $x$ and $x'$ are located exactly in the same places. Therefore $\tau_i(x_\ell)=\tau_i(x'_\ell)$ for all $\ell\in\Z$. Since $\tau_i$ is injective in $V_{i+1}$, we conclude that $x_\ell=x'_\ell$ for all $\ell\in\Z$, thus $x=x'$.
\end{proof}

\section{Proof of the main results.}\label{sec:proofs}

\subsection{Building a suitable representation}
Given a dimension group with unit, the main idea is to realize it as the dimension group of a  Bratteli-Vershik system, in which the number of paths between the level $i$ and the root grows much faster than the number of vertices at the level $i$  (see for instance Proposition \ref{prop:Complexitybound}). The rate at which these quantities differ will depend on how little (superlinear) complexity we intend to get. One way to obtain such a representation is to modify a Bratteli-Vershik system into another one of the same strong orbit equivalence class, in which the corresponding matrices have entries much larger than their dimensions. To achieve this, we introduce the following notion/procedure that allows us to split the levels, and then conveniently factorize the matrices.  

\subsubsection*{Splitting the levels}
Let $A$ be a $n\times m$ integer matrix with positive entries and  $d$ a positive integer. For each entry $a_{k,j}$ of $A$, write $a_{k,j}=dq_{k,j}+r_{k,j}$ where $q_{k,j}\in \N$ and $0\leq r_{k,j}< d$. Let $Q=(q_{k,j})_{k,j}$ and $R=(r_{k,j})_{k,j}$. Then 
\[ A=dQ+R.\]
Let $B$ be the $n\times 2m$ matrix such that $B_{\bullet,2i-1}=dQ_{\bullet,i}$ and $B_{\bullet,2i}=R_{\bullet,i}$ for $1\leq i \leq m$. That is, in $B$ we write the columns of $dQ$ and $R$ interleaved.
Notice that \[  A= B C \]
where $C$ is the $2m\times m$ matrix given by $C_{\bullet,i}=e_{2i-1}+e_{2i}$, for $1\leq i \leq m$, $e_i$ being the $i$th canonical vector of $\R^{2m}$. For convenience we write $B=B(A,d)$, $C=C(A,d)$ to stress the dependence on $A$ and $d$.

\begin{proof}[Proof of \cref{theo:main1}]

Let $(G,G^+,u)$ be a simple dimension group with unit, let $(V,E)$ be a Bratteli diagram such that $(G,G^+,u)\cong K_0(V,E)$. Let $(A_i)_{i\in \N}$ denote the sequence of adjacency matrices of $(V,E)$, where $A_i$ is of $n_i\times m_i$ (so $n_i=m_{i+1}$). Telescoping the diagram if needed,  we may assume that the entries of each $A_i$ are positive and thanks to Lemma \ref{lem:h's} we can assume that there exists a sequence of positive integers $(h_i)_{i\in\N}$ such that for all $i\in\N$, $h_i+h_i^2<\min_{k,j}\{(A_i)_{k,j}\}$ and for all $i\geq 1$, 
\begin{equation} \label{equation:growthh's} \frac{m_i^3 t}{p_{t}} <\frac{1}{i}\end{equation}
whenever $t\geq h_i$.
We can also assume that  $h_i>2|m_{i-1}|$ for all $i\geq 1$.
Define $B_i:=B(A_i,h_i)$, $C_i:=C(A_i,h_i)$ for $i\in \N$. The splitting procedure of $(V,E)$ is schematically depicted in Figure \ref{fig:splitting}.

\begin{figure}[h]

\begin{minipage}{.45\textwidth}
\begin{center}
\begin{tikzpicture}[scale=0.4]

\fill[black] (-4,0) circle (5pt);
\fill[black] (4,0) circle (5pt);
\fill[black] (-4,5) circle (5pt);
\fill[black] (4,5) circle (5pt);

\filldraw[color=black, fill=white, thick](-6,-2) circle (4pt);
\filldraw[color=black, fill=white, thick](-2,-2) circle (4pt);
\filldraw[color=black, fill=white, thick](2,-2) circle (4pt);
\filldraw[color=black, fill=white, thick](6,-2) circle (4pt);

\filldraw[color=black, fill=white, thick](-6,3) circle (4pt);
\filldraw[color=black, fill=white, thick](-2,3) circle (4pt);
\filldraw[color=black, fill=white, thick](2,3) circle (4pt);
\filldraw[color=black, fill=white, thick](6,3) circle (4pt);

\draw[purple, dashed, thick] (-4,0) -- (-6,-2);
\draw[purple, dashed, thick] (-4,0) -- (-2,-2);
\draw[purple, dashed, thick] (4,0) -- (2,-2);
\draw[purple, dashed, thick] (4,0) -- (6,-2);

\draw[purple, dashed, thick] (-4,5) -- (-6,3);
\draw[purple, dashed, thick] (-4,5) -- (-2,3);
\draw[purple, dashed, thick] (4,5) -- (2,3);
\draw[purple, dashed, thick] (4,5) -- (6,3);

\draw (-3.85,0) -- (-3.85,5);
\draw (-3.95,0) -- (-3.95,5);
\draw (-4.05,0) -- (-4.05,5);
\draw (-4.15,0) -- (-4.15,5);

\draw (3.85,-0.1) -- (-4.15,5);
\draw (3.95,-0.05) -- (-4,5);
\draw (4.05,0) -- (-3.95,5.07);
\draw (4.18,0) -- (-3.95,5.15);

\draw (3.85,0) -- (3.85,5);
\draw (3.95,0) -- (3.95,5);
\draw (4.05,0) -- (4.05,5);
\draw (4.15,0) -- (4.15,5);

\draw (-4.17,0) -- (3.85,5.08);
\draw (-4.05,0) -- (3.95,5.05);
\draw (-3.95,-0.05) -- (4.05,5);
\draw (-3.85,-0.1) -- (4.18,5);

\node at (-7,5) {\Large $V_{i-1}$};
\node at (-7,0) {\Large $V_i$};

\end{tikzpicture}
\end{center}
 \end{minipage}
\begin{minipage}{0.45\textwidth}
  \begin{center}
  \vspace{0.5cm} 
     \begin{tikzpicture}[scale=0.4]

\fill[black] (-4,0) circle (5pt);
\fill[black] (4,0) circle (5pt);
\fill[black] (-4,5) circle (5pt);
\fill[black] (4,5) circle (5pt);

\filldraw[color=black, fill=white, thick](-6,-2) circle (4pt);
\filldraw[color=black, fill=white, thick](-2,-2) circle (4pt);
\filldraw[color=black, fill=white, thick](2,-2) circle (4pt);
\filldraw[color=black, fill=white, thick](6,-2) circle (4pt);

\filldraw[color=black, fill=white, thick](-6,3) circle (4pt);
\filldraw[color=black, fill=white, thick](-2,3) circle (4pt);
\filldraw[color=black, fill=white, thick](2,3) circle (4pt);
\filldraw[color=black, fill=white, thick](6,3) circle (4pt);

\draw[purple, dashed, thick] (-4,0) -- (-6,-2);
\draw[purple, dashed, thick] (-4,0) -- (-2,-2);
\draw[purple, dashed, thick] (4,0) -- (2,-2);
\draw[purple, dashed, thick] (4,0) -- (6,-2);

\draw[purple, dashed, thick] (-4,5) -- (-6,3);
\draw[purple, dashed, thick] (-4,5) -- (-2,3);
\draw[purple, dashed, thick] (4,5) -- (2,3);
\draw[purple, dashed, thick] (4,5) -- (6,3);

\draw (-6.1,2.9) -- (-6.1,-1.9);
\draw (-5.9,2.9) -- (-5.9,-1.9);
\draw (-6.1,2.9) -- (1.9,-1.9);
\draw (-5.9,2.9) -- (2.1,-1.9);
\draw (-6.1,2.9) -- (5.9,-1.9);
\draw (-5.9,2.9) -- (6.1,-1.9);
\draw (-6.1,2.9) -- (-2.1,-1.9);
\draw (-5.9,2.9) -- (-1.9,-1.9);

\draw (-2.1,2.9) -- (-6.1,-1.9);
\draw (-1.9,2.9) -- (-5.9,-1.9);
\draw (-2.1,2.9) -- (-2.1,-1.9);
\draw (-1.9,2.9) -- (-1.9,-1.9);
\draw (-2.1,2.9) -- (1.9,-1.9);
\draw (-1.9,2.9) -- (2.1,-1.9);
\draw (-2.1,2.9) -- (5.9,-1.9);
\draw (-1.9,2.9) -- (6.1,-1.9);

\draw (1.9,2.9) -- (-6.1,-1.9);
\draw (2.1,2.9) -- (-5.9,-1.9);
\draw (1.9,2.9) -- (1.9,-1.9);
\draw (2.1,2.9) -- (2.1,-1.9);
\draw (1.9,2.9) -- (5.9,-1.9);
\draw (2.1,2.9) -- (6.1,-1.9);
\draw (1.9,2.9) -- (-2.1,-1.9);
\draw (2.1,2.9) -- (-1.9,-1.9);

\draw (5.9,2.9) -- (-6.1,-1.9);
\draw (6.1,2.9) -- (-5.9,-1.9);
\draw (5.9,2.9) -- (-2.1,-1.9);
\draw (6.1,2.9) -- (-1.9,-1.9);
\draw (5.9,2.9) -- (1.9,-1.9);
\draw (6.1,2.9) -- (2.1,-1.9);
\draw (5.9,2.9) -- (5.9,-1.9);
\draw (6.1,2.9) -- (6.1,-1.9);

\node at (-7,3) {\Large $V_i'$};
\node at (-7,-2) {\Large $V_{i+1}'$};

\end{tikzpicture}
    \end{center}
    \end{minipage}

\caption{\label{fig:splitting} (Splitting the levels) In the figure, each vertex of each level of $(V,E)$ is split into two new vertices (those connected to it by a dashed edge). Solid edges on the left side of the figure represent $E_i$. Solid edges on the right side of the figure represent $E_{i+1}'$.}
 \end{figure}
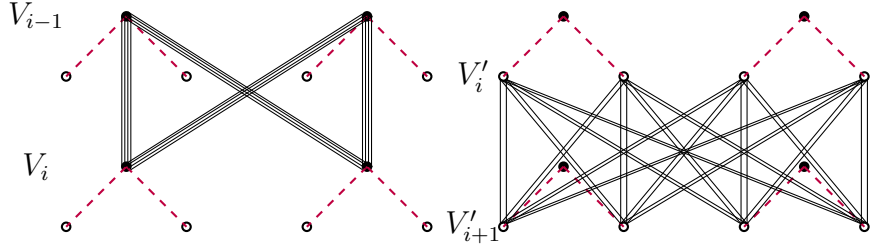

We define the diagram $(V',E')$ as the one given by the matrices $(A_i')_{i\in \N}$ with $A_0'= C_0$ and $A_i'=C_iB_{i-1}$ for $i\geq 1$. Note that $|V_0'|=|V_0|=1$ and $|V_i'|=2|V_{i-1}|=2m_{i-1}$ for all $i\geq 1$.
Note also that the diagrams $(V,E)$ and $(V',E')$ satisfy $K_0(V,E)\cong K_0(V',E')$. Indeed, since  we have the equations $A_i=B_iC_i \quad \forall i\in\N$ and $A_i'=C_iB_{i-1} \quad \forall i\in \N^{\ast}$, $(V,E)$ and $(V',E')$ are the telecopings of $(\Tilde{V},\Tilde{E})$ to the even and odd levels, respectively, where $(\Tilde{V},\Tilde{E})$ is the diagram given by 
$$
\Tilde{V_i} = \begin{cases}   V_i  & \text{ if }    i \text{ is even} \\
V_{\frac{i+1}{2}}' & \text{ if } i \text{ is odd}
\end{cases}
\qquad 
\Tilde{A_i} = \begin{cases}   C_{\frac{i}{2}}  & \text{ if }    i \text{ is even} \\
B_{\frac{i-1}{2}} & \text{ if } i \text{ is odd.}
\end{cases}
$$

 We now construct a sequence $(J_i)_{i\in\N}$ of matrices adapted to $(A_i')_{i\in\N}$ (see Definition \ref{def:adapted}). Using Lemma {\ref{lema:AdaptedMatrices}} we can modify $(V',E')$ to obtain another diagram $(V'',E'')$ with the same dimension group, in which the number of paths between the $i$th level and the root grows faster enough with respect to number of vertices at level $i$. See Figure \ref{fig:factorization} for an illustration of the passage from $(V',E')$ to $(V'',E'')$.

Let $J_0=1$, and for all $i\geq 1$, let $J_i$ be the $2m_{i-1}\times 2m_{i-1}$ integer matrix given by
\[  J_i=S_iD_i\]

where: 
\begin{itemize} 
\item $D_i$ is the $2m_{i-1}\times 2m_{i-1}$ diagonal matrix with positive integer coefficients such that $(D_i)_{k,k}=h_{i-1}$ if $k$ is odd and  $(D_i)_{k,k}=1$ if $k$ is even.
	
\item $S_i$ is a row addition matrix of $2m_{i-1}\times 2m_{i-1} $, such that for a matrix $X$ of $2m_{i-1}\times k$, $S_iX$ is the $2m_{i-1}\times k$ matrix such that
\[ (S_iX)_{2j-1,\bullet}=X_{2j-1,\bullet} \text{ and } (S_iX)_{2j,\bullet}=X_{2j-1,\bullet}+X_{2j,\bullet} \ \text{ for all } 1\leq i\leq m_{i-1}.  \]

\end{itemize} 
For all $i\geq 0$, let $A''_i=J_{i+1}A_i'J_{i}^{-1}$.

{\bf Claim:} We claim that $A_i'J_{i}^{-1}$ is a matrix with entries in $\N^{\ast}$ for all $i\in\N$. The case $i=0$ is trivial. For $i\geq 1$, note  that $A_i'J_{i}^{-1}=C_iB_{i-1}D_{i}^{-1}S_{i}^{-1}$ and that when we multiply $B_{i-1}$ from the right by the matrix $D_{i}^{-1}S_{i}^{-1}$, we first divide the odd columns of $B_{i-1}$ by $h_{i-1}$ and then substract the  $2j$-th column to the divided ($2j-1$)th column of $B_{i-1}$. Therefore, the entries of the odd columns of $B_{i-1}D_{i}^{-1}S_{i}^{-1}$ have the form $q_{i-1}(k,j)-r_{i-1}(k,j)$, and those of the even columns have the form $r_{i-1}(k,j)$, where $q_{i-1}(k,j)$ and $r_{i-1}(k,j)$ are the coefficients of the matrices $Q$ and $R$ used in the splitting of $A_{i-1}$ with respect to $h_{i-1}$. This implies that $B_{i-1}D_{i}^{-1}S_{i}^{-1}$ has integer entries. Let us show that they are positive. This is clear for the even columns. For the odd columns, note that since $h_{i-1}q_{i-1}(k,j)>h_{i-1}^2+h_{i-1}-r_{i-1}(k,j)$ and $r_{i-1}(k,j)< h_{i-1}$ by definition, we have that $q_{i-1}(k,j)>h_{i-1}$ and thus $q_{i-1}(k,j)-r_{i-1}(k,j)>0$. This shows the Claim.

Let $(V'',E'')$ be the Bratteli diagram associated with the matrices $(A_i'')_{i\in \N}$. By the Claim above, the sequence $(J_i)_{i\in\N}$ is adapted to $(A_i')_{i\in\N}$, and by \cref{lema:AdaptedMatrices} we have that $(V'',E'')\sim (V', E')$. This implies that $(G,G^+,u)\cong K_0(V'',E'')$. Note that $|V_i''|=|V_i'|$ for all $i\in \N$, so $|V_0''|=1$ and $|V_i''|=2m_{i-1}$ for all $i\geq 1$.

\vspace{0.5cm}

\begin{figure}[h]

\begin{minipage}{.45\textwidth}
\begin{center}
\begin{tikzpicture}[scale=0.4]

\filldraw[color=black, fill=white, thick](-6,-2) circle (4pt);
\filldraw[color=black, fill=white, thick](-2,-2) circle (4pt);
\filldraw[color=black, fill=white, thick](2,-2) circle (4pt);
\filldraw[color=black, fill=white, thick](6,-2) circle (4pt);

\filldraw[color=black, fill=white, thick](-6,3) circle (4pt);
\filldraw[color=black, fill=white, thick](-2,3) circle (4pt);
\filldraw[color=black, fill=white, thick](2,3) circle (4pt);
\filldraw[color=black, fill=white, thick](6,3) circle (4pt);

\draw (-6.1,2.9) -- (-6.1,-1.9);
\draw (-5.9,2.9) -- (-5.9,-1.9);
\draw (-6.1,2.9) -- (1.9,-1.9);
\draw (-5.9,2.9) -- (2.1,-1.9);
\draw (-6.1,2.9) -- (5.9,-1.9);
\draw (-5.9,2.9) -- (6.1,-1.9);
\draw (-6.1,2.9) -- (-2.1,-1.9);
\draw (-5.9,2.9) -- (-1.9,-1.9);

\draw (-2.1,2.9) -- (-6.1,-1.9);
\draw (-1.9,2.9) -- (-5.9,-1.9);
\draw (-2.1,2.9) -- (-2.1,-1.9);
\draw (-1.9,2.9) -- (-1.9,-1.9);
\draw (-2.1,2.9) -- (1.9,-1.9);
\draw (-1.9,2.9) -- (2.1,-1.9);
\draw (-2.1,2.9) -- (5.9,-1.9);
\draw (-1.9,2.9) -- (6.1,-1.9);

\draw (1.9,2.9) -- (-6.1,-1.9);
\draw (2.1,2.9) -- (-5.9,-1.9);
\draw (1.9,2.9) -- (1.9,-1.9);
\draw (2.1,2.9) -- (2.1,-1.9);
\draw (1.9,2.9) -- (5.9,-1.9);
\draw (2.1,2.9) -- (6.1,-1.9);
\draw (1.9,2.9) -- (-2.1,-1.9);
\draw (2.1,2.9) -- (-1.9,-1.9);

\draw (5.9,2.9) -- (-6.1,-1.9);
\draw (6.1,2.9) -- (-5.9,-1.9);
\draw (5.9,2.9) -- (-2.1,-1.9);
\draw (6.1,2.9) -- (-1.9,-1.9);
\draw (5.9,2.9) -- (1.9,-1.9);
\draw (6.1,2.9) -- (2.1,-1.9);
\draw (5.9,2.9) -- (5.9,-1.9);
\draw (6.1,2.9) -- (6.1,-1.9);

\node at (-7,3) {\Large $V_i'$};
\node at (-7,-2) {\Large $V_{i+1}'$};

\end{tikzpicture}
\end{center}
 \end{minipage}
\begin{minipage}{0.45\textwidth}
  \begin{center}
     \begin{tikzpicture}[scale=0.4]

\filldraw[color=black, fill=white, thick](-6,-2) circle (4pt);
\filldraw[color=black, fill=white, thick](-2,-2) circle (4pt);
\filldraw[color=black, fill=white, thick](2,-2) circle (4pt);
\filldraw[color=black, fill=white, thick](6,-2) circle (4pt);

\filldraw[color=black, fill=white, thick](-6,3) circle (4pt);
\filldraw[color=black, fill=white, thick](-2,3) circle (4pt);
\filldraw[color=black, fill=white, thick](2,3) circle (4pt);
\filldraw[color=black, fill=white, thick](6,3) circle (4pt);

\draw (-6.1,2.9) -- (-6.1,-1.9);
\draw (-5.9,2.9) -- (-5.9,-1.9);
\draw (-6.03,2.85) -- (-6.03,-1.85);
\draw (-5.97,2.85) -- (-5.97,-1.85);

\draw (-6.1,2.9) -- (-2.1,-2.1);
\draw (-5.85,3) -- (-1.9,-1.9);
\draw (-5.97,2.85) -- (-2.1, -1.95);
\draw (-5.89,2.9) -- (-2.05, -1.85);

\draw (-6.1,2.9) -- (1.9,-2.1);
\draw (-5.89,2.85) -- (1.85, -1.97);
\draw (-5.85,2.9) -- (1.87,-1.9);
\draw (-5.82, 2.95) -- (1.97, -1.9);

\draw (-6.1,2.95) -- (5.9,-2.1);
\draw (-5.89, 2.95) -- (5.89, -2.02);
\draw (-5.85, 2.95) -- (5.9, -1.92);
\draw (-5.83,3) -- (6.1,-1.9);

\draw (-2.15,2.95) -- (-6.1,-1.9);
\draw (-2.1,2.9) -- (-5.97,-1.85);
\draw (-2.03, 2.85) -- (-5.9,-1.9);
\draw (-1.9,2.9) -- (-5.85,-1.95);

\draw (-2.1,2.9) -- (-2.1,-1.9);
\draw (-1.9,2.9) -- (-1.9,-1.9);
\draw (-2.03, 2.85) -- (-2.03,-1.85);
\draw (-1.97, 2.85) -- (-1.97, -1.85);

\draw (-2.1,2.9) -- (1.85,-1.95);
\draw (-1.97, 2.85) -- (1.9,-1.9);
\draw (-1.9,2.9) -- (1.99,-1.9);
\draw (-1.85,2.95) -- (2.1,-1.9);

\draw (-2.1,2.9) -- (5.85,-2.05);
\draw (-1.93, 2.9) -- (5.84, -1.95);
\draw (-1.87, 2.93) -- (5.9, -1.9);
\draw (-1.88,3) -- (6.1,-1.9);

\draw (1.85,2.97) -- (-6.1,-1.9);
\draw (1.88,2.9) -- (-5.95,-1.9);
\draw (1.9, 2.85) -- (-5.88, -1.95);
\draw (2.1,2.9) -- (-5.85,-2);

\draw (1.85,2.97) -- (-2.1,-1.9);
\draw (1.88,2.9) -- (-1.97, -1.85);
\draw  (1.97,2.85) -- (-1.9,-1.9);
\draw (2.1,2.9) -- (-1.85,-2);

\draw (1.9,2.9) -- (1.9,-1.9);
\draw (2.1,2.9) -- (2.1,-1.9);
\draw (1.97,2.85) -- (1.97,-1.85);
\draw (2.03, 2.85) -- (2.03, -1.85);

\draw (1.9,2.9) -- (5.85,-2);
\draw (2.05,2.85) -- (5.9,-1.9);
\draw (2.13, 2.89) -- (5.99, -1.89);
\draw (2.15,3) -- (6.1,-1.9);

\draw (5.86,3.08) -- (-6.1,-1.9);
\draw (5.86, 3) -- (-5.9, -1.9);
\draw (5.9, 2.92) -- (-5.85, -1.97);
\draw (6.1,2.9) -- (-5.85,-2.05);

\draw (5.86,3) -- (-2.1,-1.9);
\draw (5.86, 2.92) -- (-1.97, -1.9);
\draw (5.95, 2.9) -- (-1.85,-1.95);
\draw (6.1,2.9) -- (-1.85,-2.05);

\draw (5.86,3) -- (1.9,-1.9);
\draw (5.89, 2.9) -- (2.03, -1.85);
\draw (6,2.88) -- (2.1,-1.93);
\draw (6.1,2.9) -- (2.12,-2.05);

\draw (5.9,2.9) -- (5.9,-1.9);
\draw (6.1,2.9) -- (6.1,-1.9);
\draw (5.97, 2.85) -- (5.97, -1.85);
\draw (6.03, 2.85) -- (6.03, -1.85);

\node at (-7,3) {\Large $V_i''$};
\node at (-7,-2) {\Large $V_{i+1}''$};

\end{tikzpicture}
\end{center}
  \end{minipage}
\caption{ (Factorization) At each level, the number of vertices of $(V'',E'')$ and $(V',E')$ is the same. Edges of $(V'',E'')$ are obtained from the factorization $A_i''=J_{i+1}A_i'J_i^{-1}$.}  \label{fig:factorization}
 \end{figure}
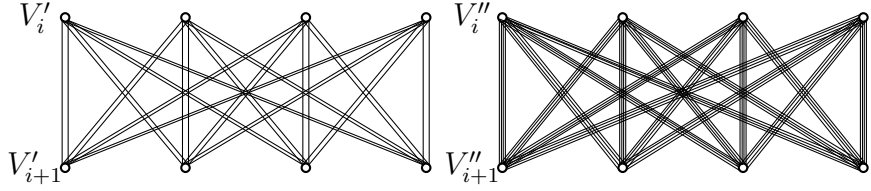

\vspace{0.5cm}
For  the diagram $(V'',E'')$ we  may consider an order as in \cref{lema:injective}. That is, we put any order in $E_1''$, and for all $i\geq 1$, we consider the order given by the morphisms $\tau_i:V_{i+1}''\to V_i''$, defined by
$$\tau_i(v_j)=u_1^{\ell_{1,j}}u_2^{\ell_{2,j}}u_1^{\ell'_{1,j}}u_3^{\ell_{3,j}}u_4^{\ell_{4,j}}\cdots u_{n_i}^{\ell_{n_i,j}} \quad \text{ for all } 1\leq j\leq n_{i+1},$$
where $n_i=|V_i''|$, $n_{i+1}=|V_{i+1}''|$, $\ell_{1,j}=j$, $\ell'_{1,j}=A_{i}''(j,1)-j$ and $\ell_{k,j}=A_{i}''(j,k)$ for all $2\leq k\leq n_i$. Indeed, the condition required to define this order is that the minimum number of edges connecting the first vertex in $V_i''$ with any vertex in $V_{i+1}''$, is strictly larger than $n_{i+1}=2m_{i}$, that is, $A_{i}''(j,1)> 2m_{i}$. In our case, all entries of $A_{i}''$ are greater than $h_{i+1}$ by construction, in particular those of its first column. Since we have chosen $h_{i+1}$ greater than $2m_{i}$, the condition is fulfilled.

Let $\geq$ denote the partial order considered on $E''$. Let $(Y,S)$ be the $\cS$-adic subshift generated by the directive sequence $\bs=(\tau_i)_{i\geq 0}$ of morphisms read on $(V'',E'',\geq)$. Combining \cref{prop:isom} and \cref{lema:injective} we obtain that the Bratteli-Vershik system associated with $(V'',E'', \geq)$ is conjugate to $(Y,S)$. We obtain that $(G,G^+,u)\cong K^0(Y,S)$, which proves the first part of the theorem.

% Remark that by construction we have that $\langle \tau_{[0,i)}\rangle \geq h_{i+1}$ for all $i\geq 1$. Note also that $\textrm{r-comp}(\tau_{i})=|V_{i+2}''|(|V_{i+1}''|+1)$ so we can assume that $\textrm{r-comp}(\tau_{i})\leq |V''_{i+2}|^2=4m_{i+1}^2$.

% For $n\in \N$, $n\geq \|\tau_0\|$, let $i=i(n)$ such that $n\in [\|\tau_{[0,i)}\|, \| \tau_{[0,i+1)}\|)$. Then 
% \cref{prop:Complexitybound} gives us that

% \[p_Y(n) \leq \begin{cases}   3|V_{i+2}''|^3   \cdot n  & \text{ if }  n\in [\|\tau_{[0,i)}\|, \langle  \tau_{[0,i+1)}\rangle)    \\
% 5|V_{i+3}''|^3\cdot n & \text{ if }  n\in [\langle \tau_{[0,i+1)}\rangle , \| \tau_{[0,i+1)}\|)
% \end{cases}  \] 
% For $n\in [\|\tau_{[0,i)}\|, \langle  \tau_{[0,i+1)}\rangle)$, since $n\geq h_{i+1}$, by \eqref{equation:growthh's} we have that

% \[\frac{p_{Y}(n)}{p_n} \leq 24m_{i+1}^3\frac{n}{p_n} \leq 24\frac{1}{i+1}  \]
% while for $ n\in [\langle \tau_{[0,i+1)}\rangle , \| \tau_{[0,i+1)}\|)$, since $n\geq h_{i+2}$, by \eqref{equation:growthh's} 

% \[\frac{p_{Y}(n)}{p_n} \leq 40m_{i+2}^3\frac{n}{p_n}\leq 40\frac{1}{i+2} \]
% From this, we deduce that $p_Y(n)/p_n \to 0$ since $n\to \infty$ implies that $i(n)\to \infty$. This proves the second part of the theorem. 

 Remark that by construction we have that $\langle \tau_{[0,i)}\rangle \geq h_{i}$ for all $i\geq 1$. Note also that $\textrm{r-comp}(\tau_{i})=|V_{i+1}''|(|V_{i}''|+1)$ so we can assume that $\textrm{r-comp}(\tau_{i})\leq |V''_{i}|^2=4m_{i}^2$.

For $n\in \N$, $n\geq \|\tau_0\|$, let $i=i(n)$ such that $n\in [\|\tau_{[0,i)}\|, \| \tau_{[0,i+1)}\|)$. Then 
\cref{prop:Complexitybound} gives us that

\[p_Y(n) \leq \begin{cases}   3|V_{i+1}''|^3   \cdot n  & \text{ if }  n\in [\|\tau_{[0,i)}\|, \langle  \tau_{[0,i+1)}\rangle)    \\
5|V_{i+2}''|^3\cdot n & \text{ if }  n\in [\langle \tau_{[0,i+1)}\rangle , \| \tau_{[0,i+1)}\|)
\end{cases}  \] 
For $n\in [\|\tau_{[0,i)}\|, \langle  \tau_{[0,i+1)}\rangle)$, since $n\geq h_{i}$, by \eqref{equation:growthh's} we have that

\[\frac{p_{Y}(n)}{p_n} \leq 24m_{i}^3\frac{n}{p_n} \leq 24\frac{1}{i}  \]
while for $ n\in [\langle \tau_{[0,i+1)}\rangle , \| \tau_{[0,i+1)}\|)$, since $n\geq h_{i+1}$, by \eqref{equation:growthh's} 

\[\frac{p_{Y}(n)}{p_n} \leq 40m_{i+1}^3\frac{n}{p_n}\leq 40\frac{1}{i+1} \]
From this, we deduce that $p_Y(n)/p_n \to 0$ since $n\to \infty$ implies that $i(n)\to \infty$. This proves the second part of the theorem.

\end{proof}

\begin{proof}[Proof of \cref{theo:main2}]
It suffices to take $(G,G^+,u)$ equal to $K^0(X,T)$ and apply \cref{theo:main1}. The resulting subshift $(Y,S)$ has the desired property on the complexity and is strong orbit equivalent to $(X,T)$ by \cref{theo:caracterizationoe}. 
\end{proof}

For the proof of \cref{thm:divisible_toeplitz} we proceed in a slightly different way, since, as we will see in the proof, having a divisible dimension group gives more flexibility when modifying the Bratteli diagram. Our strategy follows the ideas in the proof of \cite[Theorem 12]{noruegos}, tailored to our situation in order to obtain a Toeplitz subshift $(Y,S)$ having low enough complexity. For the sake of completeness we provide all the details, repeating some arguments of \cite[Theorem 12]{noruegos}.

\begin{proof}[Proof of \cref{thm:divisible_toeplitz}] Suppose that $G$ is a divisible group and let $(V,E)$ be a Bratteli diagram such that $(G,G^+,u)\cong K_0(V,E)$. Let $(A_i)_{i\in \N}$ be the sequence of adjacency matrices of $(V,E)$, where $A_i$ is of $m_{i+1}\times m_i$. We may assume that all the entries of each $A_i$ are positive, that $m_i\geq 2$ for all $i\geq 1$ and that $m_1=2$ with a single edge connecting $v_0$ with each of the two vertices of $V_1$. This is because we can always split the levels where there is only one node in a similar way as in the non-divisible case, to get a new Bratteli diagram with the same dimension group with unit: write the first adjacency matrix as $A_0=B_0C_0$, where $C_0=(1,1)^t$ and $B_0$ is any $|m_1|\times 2$ integer matrix verifying $B_0(j,1)+B_0(j,2)=A_0(j,1)$ for all $1\leq j\leq m_1$; for $i\geq 1$, if $m_i\geq 2$, define $C_i=\Id_{m_i\times m_i}$ and $B_i=A_i$, if $m_i=1$, define $C_i=(1,1)^t$ and $B_i$ as any $|m_{i+1}|\times 2$ integer matrix verifying $B_i(j,1)+B_i(j,2)=A_i(j,1)$ for all $1\leq j\leq m_{i+1}$. Then, define $\Tilde{A_0}=C_0$ and $\Tilde{A_i}=C_iB_{i-1}$ for all $i\geq 1$. The Bratteli diagram associated with the sequence $(\Tilde{A_i})_{i\in\N}$ has the desired property.

Note that since we assumed that $A_0=(1,1)^t$, the first adjacency matrix has the ERS property. The group $G$ corresponds to the inductive limit $\varinjlim_i (\Z^{m_i},A_i)$ and being $G$ divisible, we also have that $\varinjlim_i (\Z^{m_i},A_i)\cong \varinjlim_i (\QQ^{m_i},A_i)$ by \cref{prop:divisible1}. As in the proof of Theorem \ref{theo:main1}, we will modify the diagram $(V,E)$ to obtain a new one with the same dimension group, in which the number of paths between level $i$ and the root of the diagram grows much faster than the number of vertices at $i$. The new diagram $(V',E')$ will 
have the same number of vertices of $(V,E)$ on each level, and the sequence of incidence matrices $(A_i')_{i\in\N}$ will satisfy $A_i'=J_{i+1}A_iJ_i^{-1}$, where the sequence of matrices $(J_i)_{i\in\N}$ is defined inductively. Unlike the proof of Theorem \ref{theo:main1}, and thanks to the divisibility of $G$, here we do not need to require the product matrices $A_iJ_i^{-1}$ to have integer entries, but only that each $J_i$ is positive and invertible over $\QQ$.

For each $i\geq 1$, let $t_i$ be a positive integer such that $\forall t\geq t_i$,

\begin{equation} \label{equation:growtht} \frac{m_{i+1}^3 t}{p_{t}} <\frac{1}{i+1} \end{equation}
We define the sequence of matrices $(A_i')_{i\in\N}$ in the following way. Let $A_0'=A_0$, $J_0=1$, $J_1=\Id_{m_1\times m_1}$ and $k_1=1$. For $1\leq j\leq m_2$, let $\alpha_j$ be the sum of the $j$th row of $A_1$. Let $J_2'$ be the diagonal $m_2\times m_2$ matrix
$$\diag(\alpha_1^{-1},\alpha_2^{-1},\cdots, \alpha_{m_2}^{-1}).$$
Let $s_2\in \N^{\ast}$ be such that $s_2J_2'A_1$ has integer entries (for instance, take $s_2$ to be the least common multiple of the denominators of the entries of $J_2'A_1$), and $\ell_2\in \N^{\ast}$ chosen so that the two following conditions hold,
\begin{itemize}
\item $2s_2\ell_2>t_2$,
\item $2s_2\ell_2\cdot \min_{p,q}\{(J_2'A_1)(p,q)\}>m_2$.
\end{itemize}
Let $k_2=2s_2\ell_2$ and define $J_2=k_2J_2'$, $A_1'=J_2A_1$. Note that $J_2$ is invertible over $\QQ$ and that $A_1'$ is a positive integer matrix whose entries are all divisible by $2$ and strictly greater that $m_2$. Note also that $A_1'$ has the ERS property, with $A_1'(1,1,\cdots, 1)^t=(k_2,k_2,\cdots, k_2)^t$.

Suppose we have defined $J_2,J_3,\cdots, J_{i-1}$. Let $J_i'$ the diagonal invertible matrix over $\QQ$ such that $J_i'A_{i-1}J_{i-1}^{-1}(1,1,\cdots, 1)^t=(1,1,\cdots, 1)^t$. Let $s_i\in \N^{\ast}$ such that  $s_iJ_i'A_{i-1}J_{i-1}^{-1}$ has integer entries, and $\ell_i\in \N^{\ast}$  so that the two following conditions hold,
\begin{itemize}
\item $is_i\ell_i>t_i$,
\item $is_i\ell_i\cdot \min_{p,q}\{(J_i'A_{i-1}J_{i-1}^{-1})(p,q)\}>m_i$.
\end{itemize}
Let $k_i=is_i\ell_i$. Define $J_i=k_iJ_i'$, $A_{i-1}'=J_iA_{i-1}J_{i-1}^{-1}$.
Note that for all $i\geq 0$, the matrix $A_i'$ has the ERS property with $A_i'(1,1,\cdots, 1)^t=(k_{i+1},k_{i+1},\cdots, k_{i+1})^t$. Note also that for all $i\geq 1$, $A_i'$ satisfies the following properties,
\begin{itemize}
    \item $A_i'$ is a positive integer matrix whose entries are all divisible by $i+1$. 
    \item All the entries of $A_i'$ are strictly greater that $m_{i+1}$.
\end{itemize}
Note also that the sequence $(k_i)_{i\geq 1}$ satisfies that $k_i>t_i$ for all $i\geq 2$.
Since all matrices $J_i$ are positive and invertible over $\QQ$, the inductive limits $\varinjlim_i (\QQ^{m_i},A_i)$ and $\varinjlim_i (\QQ^{m_i},A_i')$ are order isomorphic, and the latter is at the same time order isomorphic to $\varinjlim_i (\Z^{m_i},A_i')$, since all the entries of each $A_i'$ are divisible by $i+1$ (see \cref{prop:divisible2}).

Let $(V',E')$ be the Bratteli diagram associated with $\varinjlim_i (\Z^{m_i},A_i')$. By construction, $(G,G^+,u)\cong K_0(V',E')$. Note that for all $i\in\N$, $|V_i'|=|V_i|=m_i$. Since for all $i\geq 1$, each entry of $A_i'$ is strictly greater than $m_{i+1}$, we  may endow $(V',E')$ with the order given in \cref{lema:injective}. As before, let $\geq$ denote the partial order considered on $E'$, and let $(Y,S)$ be the $\cS$-adic subshift generated by the directive sequence $\boldsymbol{\tau}=(\tau_i)_{i\geq 0}$ of morphisms read on $(V',E')$. Combining \cref{prop:isom} and \cref{lema:injective} we obtain that the Bratteli-Vershik system associated with $(V',E', \geq)$ is conjugate to $(Y,S)$. Since the adjacency matrices of $(V',E')$ have the ERS property, \cref{teo:ers} gives us that $(Y,S)$ is a Toeplitz subshift.
We now estimate the complexity $p_Y(n)$.

% Remark that since the matrices $A_i'$ have the ERS property, 
% $$\|\tau_{[0,i)}\|=\langle \tau_{[0,i)}\rangle=k_1k_2\cdots k_{i+1}\geq k_{i+1}.$$ 
% Note also that $\textrm{r-comp}(\tau_{i})=|V_{i+2}'|(|V_{i+1}|'+1)=m_{i+2}(m_{i+1}+1)$, so we can assume that $\textrm{r-comp}(\tau_{i})\leq |V_{i+2}'|^2=m_{i+2}^2$.

% For $n\in \N$, $n\geq \|\tau_0\|$, let $i=i(n)$ such that $n\in [\|\tau_{[0,i)}\|, \| \tau_{[0,i+1)}\|)$. \cref{prop:Complexitybound} gives us that
% \[p_Y(n) \leq    3m_{i+2}^3   \cdot n. \] 
% Since $n\geq \|\tau_{[0,i)}\| \geq  k_{i+1}$, and $k_{i+1}\geq t_{i+1}$, by \eqref{equation:growtht} we have that
% \[\frac{p_{Y}(n)}{p_n} \leq 3m_{i+2}^3\frac{n}{p_n} \leq \frac{3}{i+2}.  \]
% From this, we deduce that $p_Y(n)/p_n \to 0$ since $n\to \infty$ implies that $i(n)\to \infty$. The proof is completed.

Remark that since the matrices $A_i'$ have the ERS property, 
$$\|\tau_{[0,i)}\|=\langle \tau_{[0,i)}\rangle=k_1k_2\cdots k_{i}\geq k_{i}.$$ 
Note also that $\textrm{r-comp}(\tau_{i})=|V_{i+1}'|(|V_{i}|'+1)=m_{i+1}(m_{i}+1)$, so we can assume that $\textrm{r-comp}(\tau_{i})\leq |V_{i+1}'|^2=m_{i+1}^2$.

For $n\in \N$, $n\geq \|\tau_0\|$, let $i=i(n)$ such that $n\in [\|\tau_{[0,i)}\|, \| \tau_{[0,i+1)}\|)$. \cref{prop:Complexitybound} gives us that
\[p_Y(n) \leq    3m_{i+1}^3   \cdot n. \] 
Since $n\geq \|\tau_{[0,i)}\| \geq  k_{i}$, and $k_{i}\geq t_{i}$, by \eqref{equation:growtht} we have that
\[\frac{p_{Y}(n)}{p_n} \leq 3m_{i+1}^3\frac{n}{p_n} \leq \frac{3}{i+1}.  \]
From this, we deduce that $p_Y(n)/p_n \to 0$ since $n\to \infty$ implies that $i(n)\to \infty$. The proof is completed.

\end{proof}

\begin{proof}[Proof of \cref{theo:main3}]
Given any Choquet simplex $K$, we can choose a countable, dense subgroup $G$ of $\Aff(K)$, such that $G$ is a $\QQ$-vector space and contains the constant function $1$. Then, $G$ is divisible and by \cref{theo:simplexanddg} $K$ is affine homeomorphic to $S(G,G^+,1)$, where the positive cone is defined as in \cref{theo:simplexanddg}. Applying \cref{thm:divisible_toeplitz} to $(G,G^+,1)$, we get a Toeplitz subshift $(X,S)$ such that $\lim p_{X}(n)/p_n=0$ and $K^0(X,S)\cong (G,G^+,1)$. By \cite[Theorem 5.5]{HPS}, $\cM(X,S)\cong S(G,G^+,1)$, and then $K\cong \cM(X,S)$.
\end{proof}

We suspect that in \cref{thm:divisible_toeplitz} the condition of $G$ being divisible is superfluous, and that it could be replaced with ``$G$ is the dimension group associated with a Toeplitz subshift", but currently we do not know how to prove it.

It would be interesting to additionally consider ergodic theoretical properties that we can obtain in superlinear complexity subshifts. This has been recently explored in \cite{CKJS}, where the authors constructed examples of subshifts with superlinear complexity admitting loosely Bernoulli and non-loosely Bernoulli ergodic measures.

\end{document}